\theoremstyle{plain}
\newtheorem{theorem}{Theorem}[section]
\newtheorem{proposition}{Proposition}[section]
\newtheorem{definition}{Definition}[section]
\newtheorem{remark}{Remark}
\newtheorem{lemma}{Lemma}[section]
\newtheorem{corollary}{Corollary}[section]
\newcommand{\Z}{\mathbb{Z}}
\newcommand{\C}{\mathbb{C}}
\newcommand{\gl}{\frak{gl}}
\title{Rational Schur Superalgebras}
\author{Andrew Riesen  }
\renewcommand\footnotemark{}
\begin{document}
\maketitle
\begin{abstract}
We develop and study the generalization of rational Schur algebras to the super setting. Similar to the classical case, this provides a new method for studying rational supermodules of the general linear supergroup $GL(m|n)$. Furthermore, we establish a Schur-Weyl duality result for rational Schur superalgebras and conclude that under certain conditions these objects will be semisimple.
\end{abstract}
\smallskip

\section{Introduction}
In the 1980s, Green demonstrated that understanding the polynomial representations of $GL(n)$ over $\C$ is intimately linked to understanding the representation theory of Schur algebras \cite{green}. More precisely, he showed that any finite-dimensional polynomial representation of $GL(n)$ over $\C$ can be split into homogeneous representations and the category of such degree-$r$ homogeneous representations is equivalent to the category of modules of the Schur algebra $S(n,r)$. In \cite{rational}, Dotty and Dipper presented a generalization of Schur algebras called rational Schur algebras. These were defined over a field $k$ of arbitrary characteristic and shown to have a similar connection with rational representations of $GL(n)$. A key component to understanding the structure of (rational) Schur algebras is an equivalence of three definitions. For example, the Schur algebra $S(n,r)$ is defined as the dual of degree-$r$ homogeneous polynomials on $GL(n)$. It is isomorphic to the centralizer of the action of the symmetric group $S_r$ on $(k^n)^{\otimes r}$, and to the image $\rho_r(U(\gl(n)))$, where $\rho_r$ denotes the canonical representation of $\gl(n)$ on $(k^n)^{\otimes r}$. Analogous definitions are equivalent for the rational Schur algebra. In this paper, we generalize the connection rational Schur algebras have with the general linear group and demonstrate a similar equivalence of definitions in the context of supermathematics.

 Supermathematics was first studied in the 1980s and arose from the mathematical framework required for supersymmetry in physics. It has been a theme of modern mathematics to develop ideas from representation theory to the super case and study how the properties change or stay the same. We continue this theme by studying rational Schur algebras in the super case, which we call rational Schur superalgebras. Similar notions were also considered in \cite{marko2020}.

Let $m,n\geq 0$. The super analogue of the general linear group is called the general linear supergroup, denoted by $GL(m|n)$. Contrary to the classical case, $GL(m|n)$ can only be described as a particular representable functor. In addition, properties that hold in the classical case fail in the super case, such as complete reducibility. Due to this, the representation theory of the general linear supergroup is more difficult to understand. Though one can still consider the super analogue of polynomial and rational representations of $GL(m|n)$, unlike in the classical case, not all rational representations can be obtained from polynomial representations alone. This is because the super analogue of the determinant, the Berezinian, is not a polynomial function of $GL(m|n)$. Thus, the classical argument of tensoring a finite-dimensional rational representation by a large power of the determinant representation will no longer produce a polynomial representation in the super setting. For this reason, we believe that understanding the super analogue of the rational Schur algebra will provide an important tool for studying rational representations of $GL(m|n)$.
  
This paper is organized as follows. First, we review notions and definitions from supermathematics. Then we introduce a superbialgebra $\tilde{\mathcal{A}}(m|n)$, which will play the role of the polynomial ring $k[x_{i,j}:1\leq i,j\leq n]$ in the classical setting, and study its structure. From this superbialgebra we obtain rational Schur superalgebras, introduce their category of supermodules, and clarify the connection they have to representations of the general linear supergroup. The rest of this paper is devoted to proving a semisimplicity result for rational Schur superalgebras and an equivalence of three definitions. We approach this problem by establishing a Hopf superalgebra isomorphism between $\tilde{\mathcal{A}}(m|n)$ and the Hopf superalgebra of regular functions on $GL(m|n)$, which was introduced by Scheunnert and Zhang in \cite{SCHEUNERT200244}. This isomorphism then allows us to prove an equivalence of definitions and the semisimplicity of rational Schur superalgebras under certain conditions. 

In future work we hope to generalize the results of this paper to all rational Schur superalgebras of a fixed $(m,n)$, similar to what has been achieved for Schur superalgebras. By combining Theorem $3.1$ with the remarks in Section $3.3$, this should provide a new way to understand the representation theory of $GL(m|n)$.
\section{Preliminaries}

First, recall that a superspace $V$ over a field $k$ is a $k$-vector space with a $\Z/2$-grading. That is, $V=V_0\oplus V_1$, where elements of $V_0$ are referred to as even elements and elements of $V_1$ as odd. If $x\in V_0$ or $x\in V_1$ it is called homogeneous and its parity is defined by
\begin{equation*}|x|=\begin{cases}0&\text{ if }x\in V_0\\ 1&\text{ if }x\in V_1\end{cases}\end{equation*}

If $V$ is a finite-dimensional superspace such that $\dim(V_0)=m$ and $\dim(V_1)=n$, then its super-dimension is referred to as $(m|n)$. For example, the standard $(m|n)$-dimensional superspace, $k^{m|n}$, is defined as $k^{m}\oplus k^n$.

A morphism of superspaces $V$ and $W$ is a $k$-linear map $\phi:V\rightarrow W$ such that $\phi(V_i)\subseteq W_i$ for $i=0,1$. Such morphisms will be referred to as even linear maps. 

Given superspaces $V$ and $W$, we may endow their tensor product $V\otimes_k W$ with a superspace structure by defining the even and odd components as 
\[(V\otimes_k W)_0=(V_0\otimes_kW_0)\oplus (V_1\otimes_k W_1)\] 
\[(V\otimes_k W)_1= (V_0\otimes_k W_1)\oplus (V_1\otimes_k W_0)\]
An associative $k$-superalgebra is then a superspace $A$ which is an associative $k$-algebra such that its structure maps:
\begin{equation*}
 \Delta:A\otimes_k A\rightarrow A, \quad\ \eta:k\rightarrow A
\end{equation*} 
are even linear maps. Since we will be working exclusively with associative $k$-superalgebras we will simply refer to them as superalgebras. A homomorphism of superalgebras $\phi:A\rightarrow B$ is then a homomorphism of associative algebras such that $\phi$ is an even linear map. We say that a superalgebra $A$ is supercommutative if for all homogeneous $x, y\in A$ the following holds:
\[xy=(-1)^{|x|\cdot|y|}yx\]

 Also, if $A,B$ are superalgebras, then so is $A\otimes_k B$ with multiplication defined on homogeneous elements $x,w\in A$ and $y,z\in B$ by
\[(x\otimes y)\cdot (w\otimes z)=(-1)^{|y||w|}((x w)\otimes (y z))\]
and then extended linearly to all of $A\otimes_k B$.

Supercoalgebras, supermodules, supercomodules and their morphisms are defined in the exact same way as their classical counterparts, except the structure maps must respect the $\Z/2$-gradings as demonstrated above by the superalgebra definition. See  \cite{Supermath} for details. 

\subsection{The General Linear Supergroup}
In this section we recall for the sake of completeness  some definitions and facts pertaining to the general linear supergroup. All superspaces and superalgebras are assumed to be over a fixed field $k$. See the introductions of \cite{zubkov2006some, kujawa2004representation} for a more complete treatment. 

Contrary to the classical case, the general linear supergroup of a superspace is most naturally defined as a functor, in the following way. Let $M$ be a finite-dimensional superspace and fix a homogeneous basis $\{m_1,\hdots,m_t\}$. Then $M\otimes_k A$ is a right $A$-supermodule and has a $A$-linear basis given by $\mathcal{C}=\{m_i\otimes 1:1\leq i\leq t\}$. The general linear supergroup $GL(M)$ is a functor from the category of finite dimensional supercommutative superalgebras to the category of groups. It associates to every supercommutative superalgebra $A$ all of the invertible even $A$-linear maps on $M\otimes_k A$, which we denote by $GL(M)(A)$. Furthermore, if $f:A\rightarrow B$ is a morphism of supercommutative superalgebras, then $GL(M)$ sends it to $GL(M)(f):GL(M)(A)\rightarrow GL(M)(B)$, which is defined by writing an element of $T\in GL(M)(A)$ as a $t\times t$ matrix (with respect to the basis $\mathcal{C}$) with entries in $A$ and applying $f$ to each entry. In symbols, the general linear supergroup $GL(M)$ is the functor which sends objects 
\[A\mapsto (\operatorname{End}_A(M\otimes_k A))^{\times}\]
and morphisms 
\[f\in \operatorname{Hom}_{\bold{superalg}}(A,B)\ \text{ to }\ \ GL(M)(f)((a_{i,j})_{1\leq i,j\leq t})=(f(a_{i,j}))_{1\leq i,j\leq t}\]
This definition is independent of the chosen basis of $M$. 

In general $GL(M)$ will be a representable functor, but of particular interest to us is the case where $M=k^{m|n}$. The representing object of $GL(k^{m|n})$ is referred to as its coordinate superalgebra and is defined in the following way. 
\begin{definition}{\cite{zubkov2006some}}
Let $\mathcal{A}(m|n)=k[x_{ij}:1\leq i,j\leq m+n]$ be the free supercommutative superalgebra on $(m+n)\times (m+n)$ variables, with the parity on the variables given by \[|x_{ij}|:=\overline{i}+\overline{j}\mod 2,\text{ where  }\overline{i}=\begin{cases} \overline{0} 
&\text{ if }1\leq i\leq m\\ \overline{1}&\text{ if }m+1\leq i\leq m+n\end{cases}\]

 Let $d_1=\det(x_{ij})_{1\leq i,j\leq m}$ and $d_2=\det(x_{ij})_{m+1\leq i,j \leq m+n}$, which are even elements of $\mathcal{A}(m|n)$. The \emph{coordinate superalgebra of $GL(k^{m|n})$} is then defined as the localization of $\mathcal{A}(m|n)$ at $d_1 d_2$.
\label{coord_ring}
\end{definition}
From now on we will denote  $GL(k^{m|n})$  by $GL(m|n)$ and its coordinate superalgebra by $k[GL(m|n)]$.
Since the general linear supergroup is defined as a functor, it is natural for its supermodules to be morphisms of functors.
\begin{definition}
Let $M$ be a finite-dimensional superspace. Then it is a \emph{left $GL(m|n)$-supermodule} if there is a natural transformation $p:GL(m|n)\rightarrow GL(M)$.
\label{supergroup_rep4}
\end{definition}

The coordinate superalgebra of $GL(m|n)$ is also a Hopf superalgebra, with comultiplication and counit structure maps given respectively by:
\[\Delta(x_{i,j})=\sum_{k=1}^{n+m}x_{i,k}\otimes x_{k,j}, \ \ \ \ \epsilon(x_{i,j})=\delta_{i,j}\]
As described in \cite{brundanmodular} there is an equivalence between the category of left $GL(m|n)$-supermodules and right $k[GL(m|n)]$-supercomodules. We use this identification implicitly from now on when talking about $GL(m|n)$-supermodules. 

The following definitions and facts will be needed to define rational Schur superalgebras. 
\begin{definition}
Suppose that $M$ is a finite-dimensional right supercomodule of $k[GL(m|n)]$, with structure map $\tau:M\rightarrow M\otimes_k k[GL(m|n)]$. Let $\{v_1,..,v_r\}$ be a homogeneous basis of $M$. Then $\tau(v_j)=\sum_{i=1}^rv_i\otimes a_{ij}$, with $a_{ij}\in k[GL(m|n)]$. The matrix formed from these $k[GL(m|n)]$ coefficients, $A:=(a_{ij})_{1\leq i,j\leq r}$, is called the \emph{defining matrix} of the representation. 
\end{definition}
\begin{definition}
Let $M$ be a right supercomodule of $k[GL(m|n)]$. Then the \emph{coefficient space of $M$}, denoted $cf(M)$, is the superspace spanned by the entries of its defining matrix. 
\end{definition}
These constructions are independent of the choice of basis. Furthermore, if $M,N$ are representations of $GL(m|n)$, then $M\otimes_k N$ is as well and the following equality holds in $k[GL(m|n)]$:
\[cf(M\otimes_k N)=cf(M)\cdot cf(N) \] 

\subsection{Coefficient Spaces of Superalgebras}
The following results will be needed in Section $4$.

 Let $M$ be a finite-dimensional superspace and $A$ a superalgebra. Using the identification $M\otimes_k M^*\cong \operatorname{End}_k(M)$ we may endow $\operatorname{End}_k(M)$ with the structure of a superspace. It can be easily checked that composition turns $\operatorname{End}_k(M)$ into a superalgebra. So, by a representation $M$ of $A$ we mean a superalgebra homomorphism $\rho:A\rightarrow \operatorname{End}_k(M)$. 

\begin{definition}
Let $M$ be a finite-dimensional superspace with homogeneous basis $B_M=\{v_1,...,v_{\ell}\}$. If $\rho:A\rightarrow \operatorname{End}_k(M)$ is a representation of $A$, then with respect to the basis $B_M$, $\rho(a)$ is an $\ell \times \ell$ matrix with entries in $k$. Let $1\leq i,j\leq \ell$. Then the $(i,j)$th entry of $\rho(a)$ depends linearly on $a$ and so defines an element of $A^*$, which we denote as $a_{i,j}$. The \emph{defining matrix of $\rho$} is the matrix $(a_{i,j})_{1\leq i,j\leq \ell}\in \operatorname{Mat}_{\ell \times \ell}(A^*)$.
\end{definition}

\begin{definition}
\label{coeff_alg}
Let $A,M,\rho$ be as above. Then the \emph{coefficient space of $M$} is
\[cf_A(M)=\operatorname{span}_{k}\{a_{i,j}:1\leq i,j\leq l\}\]
\end{definition}
It is well known that $cf_A(M)$ is independent of the choice of basis of $M$ and so $cf_A(M)$ is well-defined. 
We will also need the following fact observed in \cite[Pg. 53]{SCHEUNERT200244}.
\begin{lemma}
\label{coeff_alg_iso}
Let $\rho:A\rightarrow \operatorname{End}_k(M)$ be a finite-dimensional representation of $A$. Then $(cf_A(M))^*$ is a superalgebra that is isomorphic to $\rho(A)$.
\end{lemma}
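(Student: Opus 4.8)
The plan is to realize $cf_A(M)$ as a finite-dimensional subcoalgebra of the linear dual $A^*$ and then pass to duals. First I would rewrite the matrix coefficients intrinsically. Let $E_{i,j}^*\in\operatorname{End}_k(M)^*$ be the functional extracting the $(i,j)$-entry with respect to $B_M$, so that $E_{i,j}^*$ is dual to the matrix unit $E_{i,j}$. Since $a_{i,j}(x)=(\rho(x))_{i,j}=E_{i,j}^*(\rho(x))$, we have $a_{i,j}=E_{i,j}^*\circ\rho=\rho^*(E_{i,j}^*)$, where $\rho^*$ is the (even) transpose of $\rho$. Hence $cf_A(M)=\operatorname{im}(\rho^*)=(\ker\rho)^{\perp}$, the annihilator of $\ker\rho$ in $A^*$; this equality holds even when $A$ is infinite-dimensional because $\operatorname{im}(\rho)$ is finite-dimensional, so every functional vanishing on $\ker\rho$ extends to one on $\operatorname{End}_k(M)$. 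Restriction of functionals then gives a well-defined, non-degenerate even pairing $cf_A(M)\times\rho(A)\to k$, $(\phi,\rho(a))\mapsto\phi(a)$, which is well-defined precisely because each $\phi\in cf_A(M)$ kills $\ker\rho$. As both sides have dimension $\dim\rho(A)$, this identifies $\rho(A)$ with $(cf_A(M))^*$ via the even linear map $\Phi$ sending $\rho(a)$ to the functional $\phi\mapsto\phi(a)$.

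Next I would put a supercoalgebra structure on $cf_A(M)$. The matrix superalgebra $\operatorname{End}_k(M)$ dualizes to the matrix supercoalgebra $\operatorname{End}_k(M)^*$ with $\Delta(E_{i,j}^*)=\sum_k E_{i,k}^*\otimes E_{k,j}^*$ and $\varepsilon(E_{i,j}^*)=\delta_{i,j}$. Because $\rho$ is a homomorphism of superalgebras, its transpose $\rho^*$ is a homomorphism of supercoalgebras, so the image $cf_A(M)$ is closed under the induced operations $\Delta(a_{i,j})=\sum_k a_{i,k}\otimes a_{k,j}$ and $\varepsilon(a_{i,j})=\delta_{i,j}$; that is, $cf_A(M)$ is a genuine subcoalgebra (the comultiplication lands in $cf_A(M)\otimes cf_A(M)$). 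Being finite-dimensional, its dual $(cf_A(M))^*$ is therefore a superalgebra under the convolution product $(f\cdot g)(c)=(f\otimes g)(\Delta c)$, with unit $\varepsilon$.

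Finally I would check that $\Phi$ intertwines composition in $\rho(A)$ with convolution in $(cf_A(M))^*$. Unwinding the convolution through $\Delta$ and the pairing, and using $a_{i,k}(a)=(\rho(a))_{i,k}$, one computes that $(\Phi(\rho(a))\cdot\Phi(\rho(b)))(a_{i,j})=\sum_k(\rho(a))_{i,k}(\rho(b))_{k,j}=(\rho(a)\rho(b))_{i,j}=(\rho(ab))_{i,j}=\Phi(\rho(ab))(a_{i,j})$, where the third equality is exactly the statement that $\rho$ is multiplicative. Since the $a_{i,j}$ span $cf_A(M)$ this forces $\Phi(\rho(a))\cdot\Phi(\rho(b))=\Phi(\rho(ab))$, and $\Phi(\operatorname{Id}_M)=\varepsilon$ shows $\Phi$ is unital; being bijective, $\Phi$ is an isomorphism of superalgebras, giving $(cf_A(M))^*\cong\rho(A)$.

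The step requiring the most care is the super bookkeeping. Both the identification $(\operatorname{End}_k(M)\otimes\operatorname{End}_k(M))^*\cong\operatorname{End}_k(M)^*\otimes\operatorname{End}_k(M)^*$ and the convolution formula carry Koszul signs of the form $(-1)^{|x|\cdot|y|}$, so I would need to verify that these cancel and that the sign-free composition $\sum_k(\rho(a))_{i,k}(\rho(b))_{k,j}$ is recovered, rather than a signed variant. It is also worth confirming that all the maps are even: $a_{i,j}$ has parity $|v_i|+|v_j|$, which makes $\Delta$, the restriction pairing, and $\Phi$ all parity-preserving, guaranteeing that the isomorphism produced is even, as a morphism of superalgebras must be.
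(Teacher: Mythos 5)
Your argument is correct in substance, but note that the paper does not actually prove this lemma: it is quoted as a fact ``observed in \cite[Pg.~53]{SCHEUNERT200244}'', so what you have written supplies a proof the paper delegates to the literature. Your route is the standard duality argument that reference has in mind: $cf_A(M)=\operatorname{im}(\rho^*)=(\ker\rho)^{\perp}\cong(A/\ker\rho)^*\cong(\rho(A))^*$, and your nondegenerate pairing, dimension count, and multiplicativity check are exactly the right verifications. The one point you flag but leave open --- the Koszul signs --- does work out, though not quite in the form you wrote: with the standard super convention $(f\otimes g)(x\otimes y)=(-1)^{|g||x|}f(x)g(y)$, the comultiplication on the coefficient functions is the \emph{signed} formula $\Delta(a_{i,j})=\sum_k(-1)^{(|i|+|k|)(|k|+|j|)}a_{i,k}\otimes a_{k,j}$ rather than the sign-free one; this is precisely Muir's $\Delta'$ recalled in Section 4 of the paper, and the same sign pattern as Lemma~\ref{comulti} for $\Delta(\tilde{x}_{i,j})$. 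In the convolution $(\Phi(\rho(a))\cdot\Phi(\rho(b)))(a_{i,j})$ the sign carried by $\Delta$ and the Koszul sign from moving $\Phi(\rho(b))$ (of parity $|b|=|k|+|j|$ on the nonvanishing terms) past $a_{i,k}$ (of parity $|i|+|k|$) are both $(-1)^{(|i|+|k|)(|k|+|j|)}$ and cancel, recovering the sign-free matrix product $\sum_k(\rho(a))_{i,k}(\rho(b))_{k,j}$ exactly as you predicted. (Alternatively, one may use the unsigned evaluation pairing throughout, in which case your sign-free $\Delta$ is literally correct; either consistent convention yields the even isomorphism $(cf_A(M))^*\cong\rho(A)$.) One small hygiene point: $A^*$ is not itself a supercoalgebra when $A$ is infinite dimensional, so $\Delta(a_{i,j})$ should be defined through the finite-dimensional quotient $A/\ker\rho$ (equivalently through $\operatorname{End}_k(M)^*$, as you do), which your factorization through $\rho^*$ already handles; with that reading, the proof is complete.
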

\section{Category of Supermodules}
Throughout this section, $k$ will denote an algebraically closed field of characteristic not equal to $2$, and all $GL(m|n)$-supermodules are left $GL(m|n)$-supermodules.
\subsection{Polynomial Representations of $GL(m|n)$}
To motivate the results for rational representations of $GL(m|n)$, we recall some basic facts about $\mathcal{A}(m|n)$ and define what polynomial representations of $GL(m|n)$ are.

Let $\mathcal{A}(m|n)$ be as in Definition \ref{coord_ring}. Like the coordinate superalgebra $k[GL(m|n)]$, it is a superbialgebra \cite{muir1991polynomial}.

\begin{definition}

For a positive integer $r$, define the \emph{degree-$r$ homogeneous polynomials} of $\mathcal{A}(m|n)$ to be:

\[\mathcal{A}(m|n;r):=\operatorname{span}_k\{x_{i_i,j_1}...x_{i_r,j_r}:1\leq i_r,j_r\leq m+n\}\]
If $r=0$, then $\mathcal{A}(m|n;0):=k$.
\end{definition}

It follows easily from the definition of comultiplication in $k[GL(m|n)]$ that $\mathcal{A}(m|n;r)$ is a sub-supercoalgebra of $k[GL(m|n)]$. As in the classical case we have a supercoalgebra decomposition:

 \[ \mathcal{A}(m|n)=\bigoplus_{r=0}^{\infty} \mathcal{A}(m|n;r)\]
\begin{definition}

Let $M$ be a $GL(m|n)$-supermodule. Then $M$ is a \emph{polynomial representation} if 
$cf(M)\subseteq \mathcal{A}(m|n)$. 

A \emph{degree-$r$ homogeneous representation} of $GL(m|n)$ is a $GL(m|n)$-supermodule $M$ such that $cf(M)\subseteq \mathcal{A}(m|n;r)$.
\end{definition}

\subsection{Rational Representations of Bidegree $(r,s)$}


Suppose that $M$ is a left $GL(m|n)$-supermodule  and let

\[i:GL(m|n)\rightarrow GL(m|n)\]
be the inversion morphism. This induces a dual map $i^*:k[GL(m|n)]\rightarrow k[GL(m|n)]$ which is the anti-pode of $k[GL(m|n)]$.
Since defining rational Schur superalgebras requires the map $i^*$ and coefficient spaces from a dual representation, we will discuss these objects in more detail. 
Consider the $(m+n)\times (m+n)$ matrix $(x_{k,\ell})_{1\leq k,\ell\leq m+n}$ with entires in $k[GL(m|n)]$. Then $i^*(x_{p,q})$ is the $(p,q)th$ entry of $((x_{k,\ell})_{1\leq k,\ell \leq m+n})^{-1}$, and these entries will be in $k[GL(m|n)]$. 
If $C$ is the defining matrix of a supercomodule $M$ of $k[GL(m|n)]$, then $M^*$ is a supercomodule of $k[GL(m|n)]$ and $(C^{-1})^{\bold{st}}$ is its defining matrix  \cite{zubkov2006some} . Here $\bold{st}$ denotes the supertranspose:

\begin{definition} Suppose that $V$ is a finite-dimensional $A$-supermodule with ordered homogeneous basis $B_V=(v_1,...,v_{\ell}|\overline{v_1},...,\overline{v_{\ell'}})$, and $T\in \operatorname{End}_A(V)$ is even. Write $[T]_{B_V}=\begin{bmatrix}T_1&T_2\\ T_3 & T_4\end{bmatrix}$. Then the \emph{supertranspose} of $[T]_{B_V}$ is \[[T]_{B_V}^{st}=\begin{bmatrix}(T_1)^t & (T_3)^t\\ -(T_2)^t& (T_4)^t\end{bmatrix}\]
\label{super_transp}
\end{definition}

For the sake of brevity, denote $V=k^{m|n}$ and $W=V^*$. Then $V$ is a left $GL(m|n)$-supermodule with structure map given by 
\[\tau(e_i)=\sum_{k=1}^{m+n} e_k\otimes x_{ki}\]
Evidently, $cf(V^{\otimes r})=\mathcal{A}(m|n;r)$.
\begin{definition}
Let $V$ and $W$ be as above. Then the $(r,s)$-homogeneous component of $k[GL(m|n)]$ is
\[\tilde{\mathcal{A}}(m|n;r,s):=cf(V^{\otimes r}\otimes_kW^{\otimes s})=cf(V)^{r}cf(W)^s\] 
The space generated by all such components is
 \[\tilde{\mathcal{A}}(m|n):=\sum_{(r,s)\in \Z_{\geq 0}^2}\tilde{\mathcal{A}}(m|n;r,s)\]
\end{definition}
We would like to describe what elements of $\tilde{\mathcal{A}}(m|n;r,s)$ look like for fixed $(r,s)$. To do this we need the notion of a parity functor and the super analogue of the determinant, the Berezinian.
The parity functor $\Pi$ is a functor on superspaces which takes a superspace $V=V_0\oplus V_1$ to the superspace $\Pi(V)$ with even component $V_1$ and odd component $V_0$, while fixing all even linear maps. In other words, it switches the parity of superspaces.
\begin{definition}
Let $T, V$ be as in Definition \ref{super_transp}, and set $B_{\Pi(V)}=(\overline{v_1},...,\overline{v_{\ell'}}|v_1,...,v_{\ell})$. Then the \emph{Berezinian} is  
\[Ber(T):=\det(T_1-T_2T_4^{-1}T_3)\det(T_4)^{-1}\]
If $T$ is an even $A$-linear map, then $[\Pi(T)]_{B_{\Pi(V)}}=\begin{bmatrix}T_4&T_3\\ T_2 & T_1\end{bmatrix}$, and
\begin{equation*}Ber^*(T):=Ber(\Pi(T))\end{equation*}
\end{definition}
\noindent This definition is independent of the homogeneous basis chosen for $V$.
\begin{proposition}{\cite{khudaverdian2005berezinians}}
\begin{enumerate}
    \item $Ber(ST)=Ber(S)Ber(T)$
    \item $Ber^*(ST)=Ber^*(S)Ber^*(T)$
\end{enumerate}
\end{proposition}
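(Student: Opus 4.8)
The plan is to prove that $Ber$ is multiplicative by factoring an arbitrary even invertible supermatrix into elementary pieces, and then to deduce the statement for $Ber^{*}$ by a parity-swap conjugation. Write an even invertible $T$ in block form $T=\begin{bmatrix}T_{1}&T_{2}\\T_{3}&T_{4}\end{bmatrix}$ as in Definition~\ref{super_transp}, so that $T_{1},T_{4}$ have even entries and $T_{2},T_{3}$ have odd entries; since $a^{2}=0$ for odd $a$ in characteristic $\neq 2$, reducing modulo the odd part shows $T_{1}$ and $T_{4}$ are invertible, which is exactly what makes $Ber(T)$ and $Ber^{*}(T)$ defined. The starting point is the Gauss (LDU) factorization
\[
T=\begin{bmatrix}I&T_{2}T_{4}^{-1}\\0&I\end{bmatrix}\begin{bmatrix}T_{1}-T_{2}T_{4}^{-1}T_{3}&0\\0&T_{4}\end{bmatrix}\begin{bmatrix}I&0\\T_{4}^{-1}T_{3}&I\end{bmatrix}=t_{+}\,d\,t_{-},
\]
with $t_{+}$ upper unitriangular, $t_{-}$ lower unitriangular, and $d$ block diagonal.

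First I would read off $Ber$ on the three elementary types straight from the definition: $Ber(t_{+})=Ber(t_{-})=1$, and $Ber(d)=\det(A)\det(B)^{-1}$ for $d=\operatorname{diag}(A,B)$. Then I would establish three invariance identities by direct block multiplication, each collapsing in one line because ordinary $\det$ is multiplicative on the central even subalgebra: (i) $Ber(t_{+}g)=Ber(g)$ (left multiplication by an upper unitriangular matrix); (ii) $Ber(g\,t_{-})=Ber(g)$ (right multiplication by a lower unitriangular matrix); and (iii) $Ber(d\,g)=Ber(d)Ber(g)$ together with $Ber(g\,d)=Ber(g)Ber(d)$. In each case one expands the product and checks that the extra terms cancel inside the Schur complement $g_{1}-g_{2}g_{4}^{-1}g_{3}$.

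With these identities, multiplicativity follows by decomposing $S=s_{+}e\,s_{-}$ and $T=t_{+}d\,t_{-}$ and stripping factors off the outside of $ST=s_{+}e\,s_{-}t_{+}d\,t_{-}$: identity (i) removes $s_{+}$ on the left, (ii) removes $t_{-}$ on the right, and (iii) pulls out $e$ and $d$, leaving $Ber(ST)=Ber(e)\,Ber(s_{-}t_{+})\,Ber(d)$. Since (i) and (ii) also give $Ber(S)=Ber(e)$ and $Ber(T)=Ber(d)$, everything comes down to the single lemma $Ber(s_{-}t_{+})=1$, where $s_{-}=\begin{bmatrix}I&0\\X&I\end{bmatrix}$ and $t_{+}=\begin{bmatrix}I&Y\\0&I\end{bmatrix}$ have odd off-diagonal blocks $X,Y$. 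I expect this to be the main obstacle. The push-through identity $(I+XY)^{-1}X=X(I+YX)^{-1}$ (which carries no sign, since it only reorders the same left-to-right monomials) collapses the Schur complement to $I-Y(I+XY)^{-1}X=(I+YX)^{-1}$, so $Ber(s_{-}t_{+})=\det(I+YX)^{-1}\det(I+XY)^{-1}$. The crux is then the super-Sylvester identity $\det(I+XY)\det(I+YX)=1$ for odd-entry matrices, which replaces the classical $\det(I+XY)=\det(I+YX)$ and already appears in the $1\times1$ case as $(1+\xi\eta)(1+\eta\xi)=1-(\xi\eta)^{2}=1$. Over characteristic $0$ this follows by taking $\log$: anticommutativity of the odd entries gives $\operatorname{tr}((YX)^{k})=-\operatorname{tr}((XY)^{k})$ for all $k$, hence $\operatorname{tr}\log(I+YX)=-\operatorname{tr}\log(I+XY)$ and the two determinants are reciprocal. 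To cover arbitrary characteristic $\neq 2$ I would prove the identity in the universal case, namely in the free supercommutative $\Z[1/2]$-algebra on the entries of $X$ and $Y$ — an exterior algebra, hence $\Z[1/2]$-free and embeddable in its rationalization where the characteristic-$0$ argument applies — and then specialize the generators to the actual odd entries in $A$.

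Finally, for $Ber^{*}$ I would note that the parity functor $\Pi$ acts on representing matrices by swapping the two graded blocks, i.e. $\Pi(T)=PTP^{-1}$ for the block permutation $P=\begin{bmatrix}0&I\\I&0\end{bmatrix}$. Hence $\Pi(ST)=\Pi(S)\Pi(T)$ (a direct check on the block products, using that the even entries commute), and since $Ber^{*}(T)=Ber(\Pi(T))$, part (2) is immediate from part (1) applied to $\Pi(S)$ and $\Pi(T)$.
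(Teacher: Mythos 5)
Your argument is correct, but note that the paper itself offers no proof to compare against: this proposition is stated with a citation to \cite{khudaverdian2005berezinians} and used as a black box (e.g.\ in Lemma \ref{description2} via the super Cramer rule). What you have written is essentially the standard proof from that literature, made self-contained: the Gauss factorization $T=t_{+}dt_{-}$, the three one-line block-multiplication invariance identities, the reduction of everything to the single lemma $Ber(s_{-}t_{+})=1$, and then the odd Sylvester identity $\det(I+XY)\det(I+YX)=1$. All the computations check out: the push-through identity does collapse the Schur complement of $s_{-}t_{+}=\begin{bmatrix}I&Y\\X&I+XY\end{bmatrix}$ to $(I+YX)^{-1}$; the trace identity $\operatorname{tr}((YX)^{k})=-\operatorname{tr}((XY)^{k})$ is right (cyclically moving one odd factor past $2k-1$ odd factors produces exactly one sign); and since $\begin{bmatrix}T_{4}&T_{3}\\T_{2}&T_{1}\end{bmatrix}=PTP^{-1}$ for $P=\begin{bmatrix}0&I\\I&0\end{bmatrix}$, which matches $[\Pi(T)]_{B_{\Pi(V)}}$ in Definition \ref{super_transp}, part (2) follows formally from part (1) with no commutation hypotheses needed. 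Two minor points. First, where you deduce invertibility of $T_{1},T_{4}$ by ``reducing modulo the odd part,'' lifting units back from the quotient requires the ideal generated by the relevant odd entries to be nilpotent; this is automatic here because only finitely many odd entries are involved (an ideal of a supercommutative algebra generated by $N$ odd elements satisfies $I^{N+1}=0$), or one can reuse the universal exterior-algebra specialization you already employ. Second, your $\Z[1/2]$ base-change step is what makes the proof valid under the standing hypothesis of Section 3 (characteristic $\neq 2$) rather than only in characteristic zero, where the $\operatorname{tr}\log$ argument alone suffices; that uniformity is a genuine gain over simply quoting the characteristic-zero treatments, though for the paper's main results in Section 4 (which assume characteristic zero) the simpler argument would already be enough.
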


Recall that $\mathcal{A}(m|n)=k[x_{ij}:1\leq i,j\leq m+n]$, and consider the matrix $A=(x_{i,j})_{1\leq i,j\leq m+n}$. Then $Ber, Ber^*$ are elements of $k[GL(m|n)]$ represented by $Ber(A),Ber^*(A)$. Let $D(i,j)$ be the matrix formed by replacing the $i$th row of $A$ with zeros except the $(i,j)$th entry which is $1$. Evidently, $Ber(D(i,j)),Ber^*(D(i,j))\in k[GL(m|n)]$, and we denote these elements by $Ber_{i,j},Ber^*_{i,j}$ respectively. Lastly, denote by $\tilde{x}_{i,j}$ the $(i,j)$th entry of $A^{-1}$. 

We may now give an explicit description of $\tilde{\mathcal{A}}(m|n)$.
\begin{lemma}
\label{description}
The following properties hold:
\begin{enumerate}[ref=\thelemma\text{, Part }\arabic*]
    \item $\tilde{\mathcal{A}}(m|n)$ is generated as a $k$-superalgebra by the elements $x_{i,j} , \tilde{x}_{i,j}$ \label{description1}
    \item By the super Cramer rule \cite{khudaverdian2005berezinians}, $cf(W)$ is the $k$-space spanned by $\tilde{x}_{i,j}$ where \label{description2}
    \[\tilde{x}_{i,j}=\begin{cases} Ber_{i,j}(Ber)^{-1} & \text{ when }i\in \{1,...,m\}\\ Ber^*_{i,j}(Ber^*)^{-1}& \text{ when }i\in\{m+1,...,m+n\} \end{cases} \]

    \item $\tilde{\mathcal{A}}(m|n;r,s)$ is the span of the following set \label{description3}
    \[\{x_{i_1,j_1}^{m_1} \hdots x_{i_t,j_t}^{m_t}\tilde{x}_{i_{t+1},j_{t+1}}^{m_{t+1}} \hdots \tilde{x}_{i_{t+k},j_{t+k}}^{m_{t+k}}: 1\leq i_l,j_l\leq m+n, \ m_1+...+m_t=r, m_{t+1}+...+m_{t+k}=s\}\]
    \item \[\sum_{k=1}^{m+n}x_{i,k}\tilde{x}_{k,j}=\delta_{i,j}\ \ \ \ \ \text{ and } \ \ \ \ \ \sum_{k=1}^{m+n}\tilde{x}_{i,k}x_{k,j}=\delta_{i,j}\] \label{description4}
\end{enumerate}

\end{lemma}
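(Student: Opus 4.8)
The plan is to establish the four parts in a logical order that lets each later part lean on the earlier ones. The central object is the defining matrix $A = (x_{i,j})$ of the standard supercomodule $V = k^{m|n}$, together with its inverse $A^{-1} = (\tilde{x}_{i,j})$, whose entries lie in $k[GL(m|n)]$ via the antipode $i^*$. Since $\tilde{\mathcal{A}}(m|n;r,s) = cf(V)^r cf(W)^s$ and $cf(V) = \operatorname{span}_k\{x_{i,j}\}$, everything reduces to understanding $cf(W) = cf(V^*)$.

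I would begin with \textbf{Part 4}, since it is essentially formal and supports everything else. The identity $A \cdot A^{-1} = A^{-1} \cdot A = I$ holds in $\operatorname{Mat}_{(m+n)\times(m+n)}(k[GL(m|n)])$ by the very definition of $\tilde{x}_{i,j}$ as the entries of $A^{-1}$, and reading off the $(i,j)$ entry of each product yields the two stated sums equal to $\delta_{i,j}$. (One should take momentary care that the relevant matrix product and the supertranspose structure from Definition \ref{super_transp} are compatible, but the inverse of a matrix over the supercommutative ring $k[GL(m|n)]$ satisfies the ordinary two-sided inverse relations.)

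Next I would treat \textbf{Part 2}, which is the computational heart and the step I expect to be the main obstacle. The goal is to show that the entries of $A^{-1}$ admit the closed form in terms of $Ber_{i,j}$, $Ber^*_{i,j}$, and the Berezinians. Here one invokes the super Cramer rule of \cite{khudaverdian2005berezinians}: just as the classical Cramer rule expresses $(A^{-1})_{i,j}$ as a ratio of a minor to the determinant, its super analogue expresses $\tilde{x}_{i,j}$ as a ratio involving the Berezinian. The case split on whether $i \in \{1,\dots,m\}$ or $i \in \{m+1,\dots,m+n\}$ reflects the fact that the Berezinian treats the even and odd blocks asymmetrically, so one must verify that $Ber$ governs the rows indexed by the even part while $Ber^*$ (the Berezinian after applying the parity functor $\Pi$) governs the odd rows. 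The delicate point is matching the normalization in the definitions of $Ber_{i,j} = Ber(D(i,j))$ and $Ber^*_{i,j} = Ber^*(D(i,j))$ — where $D(i,j)$ replaces the $i$th row appropriately — against the precise form the super Cramer rule produces; this is where sign conventions and the block structure could introduce errors, so I would check a small case ($m=n=1$) explicitly to fix conventions.

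Finally, \textbf{Parts 1 and 3} follow together. For \textbf{Part 1}, the containment of the subalgebra generated by $\{x_{i,j}, \tilde{x}_{i,j}\}$ inside $\tilde{\mathcal{A}}(m|n)$ is clear since $x_{i,j} \in cf(V)$ and $\tilde{x}_{i,j} \in cf(W)$ (the latter by Part 2); the reverse containment uses the multiplicativity of coefficient spaces, $cf(V^{\otimes r} \otimes W^{\otimes s}) = cf(V)^r cf(W)^s$, so that every element of every graded piece is a polynomial in the generators. For \textbf{Part 3}, I would expand the product $cf(V)^r cf(W)^s$ directly: a spanning element is a product of $r$ factors drawn from $\{x_{i,j}\}$ and $s$ factors drawn from $\{\tilde{x}_{i,j}\}$, and collecting repeated factors into powers gives exactly the displayed monomial form with $m_1 + \cdots + m_t = r$ and $m_{t+1} + \cdots + m_{t+k} = s$. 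The only subtlety is that supercommutativity allows reordering factors up to signs, so the claimed set genuinely spans $\tilde{\mathcal{A}}(m|n;r,s)$ rather than merely lying inside it.
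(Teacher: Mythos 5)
The paper gives no proof of this lemma at all---it is stated as immediate from the definitions (the $\tilde{x}_{i,j}$ are by definition the entries of $A^{-1}$, the defining matrix of $W=V^*$ is $(A^{-1})^{\mathbf{st}}$ as recalled just before the lemma, and $\tilde{\mathcal{A}}(m|n;r,s):=cf(V)^r\,cf(W)^s$ by definition) together with the super Cramer rule of \cite{khudaverdian2005berezinians}, which is cited inside the statement itself; your proposal is a correct elaboration of exactly this implicit reasoning. Two small points: the supertranspose caveat you attach to Part 4 actually belongs to Part 2 (Part 4 is just the entrywise reading of $AA^{-1}=A^{-1}A=I$ over the supercommutative ring $k[GL(m|n)]$, whereas Part 2's spanning claim needs that $(A^{-1})^{\mathbf{st}}$ has the same entry span as $A^{-1}$ since the supertranspose only permutes entries and changes signs), and your instinct to fix the $Ber_{i,j}$/$Ber^*_{i,j}$ conventions on the $m=n=1$ case is well placed, as the row-replacement convention for $D(i,j)$ versus the classical cofactor transpose is the one genuinely delicate normalization in the closed formula.
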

\noindent Note that $Ber^*$ is the multiplicative inverse of $Ber$, but in general $Ber^*_{i,j}$ and $Ber_{i,j}$ are not invertible (see the example at the end of section eight in \cite{khudaverdian2005berezinians}). 

We would like to establish a supercoalgebra decomposition of $\tilde{\mathcal{A}}(m|n)$ in terms of $\tilde{\mathcal{A}}(m|n;r,s)$, which is similar to the one given for $\mathcal{A}(m|n)$. To do this we need to establish that $\tilde{\mathcal{A}}(m|n;r,s)$ is a supercoalgebra and to that end we prove the following:
\begin{lemma}{(Compare with \cite[Equation 2.2.2]{rational})}
\[\Delta(\tilde{x}_{i,j})=\sum_{k=1}^{m+n}(-1)^{(|i|+|k|)(|k|+|j|)}(\tilde{x}_{k,j}\otimes \tilde{x}_{i,k}) \ \ \ \epsilon(\tilde{x}_{i,j})=\delta_{i,j}\]
\label{comulti}
\end{lemma}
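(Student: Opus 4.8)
The plan is to prove the comultiplication formula for $\tilde{x}_{i,j}$ by relating it to the antipode $i^*$ of the Hopf superalgebra $k[GL(m|n)]$, exploiting the fact established earlier that the $\tilde{x}_{i,j}$ are precisely the entries of the inverse matrix $A^{-1}$ and hence lie in the image of the antipode applied to the $x_{i,j}$. The counit formula $\epsilon(\tilde{x}_{i,j})=\delta_{i,j}$ should fall out immediately from the Hopf axioms: applying $\epsilon$ to Lemma \ref{description4}, namely $\sum_k x_{i,k}\tilde{x}_{k,j}=\delta_{i,j}$, together with $\epsilon(x_{i,k})=\delta_{i,k}$, forces $\epsilon(\tilde{x}_{i,j})=\delta_{i,j}$.

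For the comultiplication formula, first I would recall the standard identity for the antipode $S=i^*$ of a Hopf superalgebra, namely that $\Delta\circ S = (S\otimes S)\circ \tau_{\text{super}} \circ \Delta$, where $\tau_{\text{super}}$ is the graded flip $x\otimes y\mapsto (-1)^{|x||y|} y\otimes x$. Since $\tilde{x}_{i,j}=S(x_{i,j})$ (being the $(i,j)$ entry of $A^{-1}$), I would apply this identity to $x_{i,j}$, using the known coproduct $\Delta(x_{i,j})=\sum_k x_{i,k}\otimes x_{k,j}$. Carefully tracking the sign introduced by the graded flip $\tau_{\text{super}}$ on the homogeneous summands $x_{i,k}\otimes x_{k,j}$ (where $|x_{i,k}|=|i|+|k|$ and $|x_{k,j}|=|k|+|j|$) should produce exactly the sign $(-1)^{(|i|+|k|)(|k|+|j|)}$ and the swapped order $\tilde{x}_{k,j}\otimes \tilde{x}_{i,k}$ appearing in the statement.

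An alternative, more self-contained route avoids invoking the antipode identity directly and instead differentiates the defining relation of Lemma \ref{description4}. Applying $\Delta$ (an algebra homomorphism) to $\sum_{k} x_{i,k}\tilde{x}_{k,j}=\delta_{i,j}$ gives $\sum_{k}\Delta(x_{i,k})\Delta(\tilde{x}_{k,j})=\delta_{i,j}\,(1\otimes 1)$. Substituting the known $\Delta(x_{i,k})$ and a general ansatz $\Delta(\tilde{x}_{k,j})=\sum_{\ell} c^{k,j}_{\ell}\,(\tilde{x}_{\ell,j}\otimes \tilde{x}_{k,\ell})$, one can solve for the coefficients $c^{k,j}_{\ell}$ by using the relations in $\tilde{\mathcal{A}}(m|n;1,1)$ and matching terms; the uniqueness of the inverse matrix forces the claimed signs. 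This is essentially the super-analogue of the computation in \cite[Equation 2.2.2]{rational}.

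The main obstacle I anticipate is getting the Koszul signs exactly right. In the super setting every transposition in the tensor product carries a parity-dependent sign, and the multiplication in $A\otimes A$ itself is twisted by the rule $(x\otimes y)(w\otimes z)=(-1)^{|y||w|}(xw\otimes yz)$. Thus in either approach one must simultaneously track (i) the sign from the graded flip and (ii) the sign from multiplying decomposable tensors, and verify they combine to precisely $(-1)^{(|i|+|k|)(|k|+|j|)}$ rather than some off-by-a-factor variant. I would handle this by fixing homogeneous elements, writing $|x_{i,k}|=\overline{i}+\overline{k}$ throughout, and checking the formula against the Hopf axiom $m\circ(S\otimes\mathrm{id})\circ\Delta=\eta\circ\epsilon$ as a consistency test, confirming that the proposed $\Delta(\tilde{x}_{i,j})$ is indeed compatible with Lemma \ref{description4} and the counit.
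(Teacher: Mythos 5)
Your primary route is correct but genuinely different from the paper's. You derive the formula from the Hopf axioms: since $i^*$ is the antipode of $k[GL(m|n)]$ and $\tilde{x}_{i,j}=i^*(x_{i,j})$ (both recorded in Section 3.2), the general fact that the antipode of a Hopf superalgebra is a supercoalgebra anti-morphism, $\Delta\circ S=(S\otimes S)\circ\tau_{\mathrm{super}}\circ\Delta$ together with $\epsilon\circ S=\epsilon$, applied to $\Delta(x_{i,j})=\sum_{k}x_{i,k}\otimes x_{k,j}$ immediately yields $\Delta(\tilde{x}_{i,j})=\sum_{k}(-1)^{(|i|+|k|)(|k|+|j|)}\tilde{x}_{k,j}\otimes\tilde{x}_{i,k}$, with the graded flip producing exactly the claimed sign since $|x_{i,k}|=|i|+|k|$ and $|x_{k,j}|=|k|+|j|$. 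The paper instead argues by hand: it defines the candidate matrix $C=(C_{i,j})$ with $C_{i,j}=\sum_{k}(-1)^{(|i|+|k|)(|k|+|j|)}\tilde{x}_{k,j}\otimes\tilde{x}_{i,k}$, verifies by a direct Koszul-sign computation using Lemma \ref{description4} that $(\Delta(x_{i,j}))\,C=\mathrm{Id}$ in $\operatorname{Mat}_{m+n}(k[GL(m|n)]\otimes_k k[GL(m|n)])$, and concludes $C=(\Delta(\tilde{x}_{i,j}))$ because the latter matrix is a two-sided inverse of $(\Delta(x_{i,j}))$ ($\Delta$ being a superalgebra morphism applied to Lemma \ref{description4}) and inverses are unique. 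Your approach buys brevity and makes the sign conceptually forced rather than verified, at the cost of importing the anti-comultiplicativity of the super antipode, which the paper never states and which you would need to cite or prove (the usual convolution-inverse argument goes through with Koszul signs); the paper's computation is longer but self-contained, using only that $\Delta$ is an algebra map. Your counit argument, applying the algebra map $\epsilon$ to $\sum_k x_{i,k}\tilde{x}_{k,j}=\delta_{i,j}$, is exactly right and is presumably the "similar" argument the paper omits.

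Your fallback route, however, has a gap as stated: you may not begin with the ansatz $\Delta(\tilde{x}_{k,j})=\sum_{\ell}c^{k,j}_{\ell}\,\tilde{x}_{\ell,j}\otimes\tilde{x}_{k,\ell}$, since nothing a priori guarantees that $\Delta(\tilde{x}_{k,j})$ is a combination of tensors of that particular shape, and ``matching terms'' would additionally require linear independence of the resulting products, which you do not have in $\tilde{\mathcal{A}}(m|n)$. The repair is precisely the paper's device: do not solve for unknown coefficients at all, but check that the fully explicit candidate $C$ is a one-sided inverse of $(\Delta(x_{i,j}))$; since that matrix already has a two-sided inverse, namely $(\Delta(\tilde{x}_{i,j}))$, uniqueness gives $C=(\Delta(\tilde{x}_{i,j}))$ with no independence argument needed. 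So your second route, properly executed, collapses into the paper's proof.
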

\begin{proof}
Suppose that $1\leq i,j, s,t\leq m+n$, and set $C_{i,j}:=\sum_{k=1}^{m+n}(-1)^{(|i|+|k|)(|k|+|j|)}\tilde{x}_{k,j}\otimes \tilde{x}_{i,k}$. Then 
\[\Delta(x_{s,t})C_{i,j}=(\sum_{p=1}^{m+n}x_{s,p}\otimes x_{p,t})(\sum_{k=1}^{m+n}(-1)^{(|i|+|k|)(|k|+|j|)}\tilde{x}_{k,j}\otimes \tilde{x}_{i,k})=\]
\[\sum_{p,k=1}^{m+n}(-1)^{(|i|+|k|)(|k|+|j|)}(-1)^{(|p|+|t|)(|k|+|j|)}(x_{s,p}\tilde{x}_{k,j})\otimes (x_{p,t}\tilde{x}_{i,k})\]
Using this calculation and the relation $\sum_{p=1}^{m+n}x_{i,p}\tilde{x}_{p,j}=\delta_{i,j}$ we find
\begin{align*}
\sum_{w=1}^{m+n}\Delta(x_{s,w})C_{w,j} &=\sum_{w=1}^{m+n}\sum_{p,k=1}^{m+n}(-1)^{(|p|+|w|)(|k|+|j|)+|w|(|k|+|j|)+|k|(|k|+|j|)}(x_{s,p}\tilde{x}_{k,j})\otimes (x_{p,w}\tilde{x}_{w,k})\\
&=\sum_{p,k=1}^{m+n}\sum_{w=1}^{m+n}(-1)^{|p|(|k|+|j|)+|k|(|k|+|j|)}(x_{s,p}\tilde{x}_{k,j})\otimes (x_{p,w}\tilde{x}_{w,k})\\
&=\sum_{p,k=1}^{m+n}(-1)^{(|p|+|k|)(|k|+|j|)}(x_{s,p}\tilde{x}_{k,j})\otimes(\sum_{w=1}^{m+n}x_{p,w}\tilde{x}_{w,k})\\
&=\sum_{p,k=1}^{m+n}(-1)^{(|p|+|k|)(|k|+|j|)}(x_{s,p}\tilde{x}_{k,j})\otimes(\delta_{p,k})\\
&=\sum_{p=1}^{m+n}(-1)^{(|p|+|p|)(|p|+|j|)}(x_{s,p}\tilde{x}_{p,j})\otimes 1= (\sum_{p=1}^{m+n}x_{s,p}\tilde{x}_{p,j})\otimes 1=\delta_{s,j}
\end{align*}

Consider now the $(m+n)\times (m+n)$ matrix $A=(\Delta(x_{i,j}))_{1\leq i,j\leq m+n}$ with entries in $k[GL(m|n)]\otimes_kk[GL(m|n)]$. This has an inverse given by $(\Delta(\tilde{x}_{i,j}))_{1\leq i,j\leq m+n}$ as comultiplication is a superalgebra morphism. If $C$ denotes the matrix $(C_{i,j})_{1\leq i,j\leq m+n}$ then the calculation above shows that $AC=Id$. This implies that $C=(\Delta(\tilde{x}_{i,j}))$ and proves the first part of the lemma. The second part follows in a similar way.
\end{proof}

\begin{proposition}
\label{superco}
$\tilde{\mathcal{A}}(m|n;r,s)$ is a supercoalgebra and $\tilde{\mathcal{A}}(m|n)$ is a superbialgebra 
\end{proposition}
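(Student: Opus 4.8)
The plan is to deduce everything from the fact that $k[GL(m|n)]$ is a Hopf superalgebra, treating $\tilde{\mathcal{A}}(m|n;r,s)$ and $\tilde{\mathcal{A}}(m|n)$ as \emph{sub}-objects. The coassociativity and counit axioms, as well as the bialgebra compatibility between multiplication and comultiplication, are then inherited for free from $k[GL(m|n)]$, provided we know that $\Delta$ and $\epsilon$ restrict appropriately. So the entire content of the proposition is a closure statement: $\Delta\big(\tilde{\mathcal{A}}(m|n;r,s)\big)\subseteq \tilde{\mathcal{A}}(m|n;r,s)\otimes \tilde{\mathcal{A}}(m|n;r,s)$ and $\Delta\big(\tilde{\mathcal{A}}(m|n)\big)\subseteq \tilde{\mathcal{A}}(m|n)\otimes \tilde{\mathcal{A}}(m|n)$, together with the (immediate) analogous statements for $\epsilon$. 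Since $k$ is a field, the inclusions $\tilde{\mathcal{A}}(m|n;r,s)\hookrightarrow k[GL(m|n)]$ remain injective after tensoring, so these tensor products are genuine subspaces of $k[GL(m|n)]\otimes_k k[GL(m|n)]$ and the containments are meaningful.

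For the supercoalgebra statement the key is a bidegree bookkeeping. In the Hopf structure $\Delta(x_{i,j})=\sum_k x_{i,k}\otimes x_{k,j}$, every summand carries a single $x$ in each tensor factor, i.e. each factor has bidegree $(1,0)$; by Lemma \ref{comulti} every summand of $\Delta(\tilde{x}_{i,j})$ carries a single $\tilde{x}$ in each factor, of bidegree $(0,1)$. Now take a spanning monomial of $\tilde{\mathcal{A}}(m|n;r,s)$ as in Lemma \ref{description3}, a product of $r$ factors $x_{\bullet,\bullet}$ and $s$ factors $\tilde{x}_{\bullet,\bullet}$. Because $\Delta$ is a homomorphism of superalgebras, $\Delta$ of this monomial is the corresponding product of the $\Delta(x)$'s and $\Delta(\tilde{x})$'s computed in the super tensor product $k[GL(m|n)]\otimes_k k[GL(m|n)]$, where multiplication only introduces scalar Koszul signs $(-1)^{|b||c|}$ and otherwise multiplies left components among themselves and right components among themselves. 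Hence every resulting term has the form $(\text{a product of } r \text{ factors } x \text{ and } s \text{ factors } \tilde{x})\otimes(\text{a product of } r \text{ factors } x \text{ and } s \text{ factors } \tilde{x})$, which by Lemma \ref{description3} lies in $\tilde{\mathcal{A}}(m|n;r,s)\otimes \tilde{\mathcal{A}}(m|n;r,s)$. Since $\epsilon(x_{i,j})=\epsilon(\tilde{x}_{i,j})=\delta_{i,j}\in k$ and $\epsilon$ is multiplicative, $\epsilon$ restricts to a map $\tilde{\mathcal{A}}(m|n;r,s)\to k$, and the coalgebra axioms hold by restriction.

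The superbialgebra statement for $\tilde{\mathcal{A}}(m|n)$ uses the same inputs. By Lemma \ref{description1} it is the sub-superalgebra of $k[GL(m|n)]$ generated over $k$ by the $x_{i,j}$ and $\tilde{x}_{i,j}$, hence closed under multiplication. The formulas for $\Delta(x_{i,j})$ and $\Delta(\tilde{x}_{i,j})$ show that $\Delta$ sends each generator into $\tilde{\mathcal{A}}(m|n)\otimes \tilde{\mathcal{A}}(m|n)$; since $\Delta$ is an algebra homomorphism and $\tilde{\mathcal{A}}(m|n)$ is generated by these elements, $\Delta$ maps all of $\tilde{\mathcal{A}}(m|n)$ into $\tilde{\mathcal{A}}(m|n)\otimes \tilde{\mathcal{A}}(m|n)$, and $\epsilon$ likewise restricts. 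All remaining superbialgebra axioms are inherited from $k[GL(m|n)]$.

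I expect the main obstacle to be purely bookkeeping rather than conceptual: one must check that the super signs appearing both in the multiplication of $k[GL(m|n)]\otimes_k k[GL(m|n)]$ and in Lemma \ref{comulti} are genuinely scalars and therefore cannot disturb the additivity of the bidegree $(r,s)$ across the two tensor factors. A related point worth flagging is that the sum $\tilde{\mathcal{A}}(m|n)=\sum_{(r,s)}\tilde{\mathcal{A}}(m|n;r,s)$ is \emph{not} direct, since the relation $\sum_k x_{i,k}\tilde{x}_{k,j}=\delta_{i,j}$ of Lemma \ref{description4} identifies elements of bidegree $(1,1)$ with scalars; consequently one cannot argue through a graded decomposition. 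This causes no difficulty, however, because the subspace/subalgebra closure argument above never uses directness.
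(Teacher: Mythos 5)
Your proof is correct and follows essentially the same route as the paper: the paper's proof of Proposition \ref{superco} simply cites Lemma \ref{comulti} and Lemma \ref{description3}, and your argument is precisely the filled-in version of that --- using the comultiplication formulas on the generators $x_{i,j}$, $\tilde{x}_{i,j}$ together with the fact that $\Delta$ is a superalgebra morphism (with only scalar Koszul signs) to show $\Delta$ and $\epsilon$ restrict to the bidegree-$(r,s)$ span, so that all axioms are inherited from $k[GL(m|n)]$. Your closing remark that the sum $\sum_{(r,s)}\tilde{\mathcal{A}}(m|n;r,s)$ is not direct (because of Lemma \ref{description4}) and that the closure argument never needs directness is also consistent with the paper, which records the inclusion $\tilde{\mathcal{A}}(m|n;r,s)\subseteq\tilde{\mathcal{A}}(m|n;r+1,s+1)$ immediately after this proposition.
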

\begin{proof}
This follows immediately from Lemma \ref{comulti} and Lemma \ref{description3}. \end{proof}

 Lemma \ref{description4} implies that 
\[\tilde{\mathcal{A}}(m|n;r,s)\subseteq \tilde{\mathcal{A}}(m|n;r+1,s+1)\]
 Following \cite{rational} we define for $z\in \Z_{\geq 0}$:
\[\tilde{\mathcal{A}}(m|n)_z:=\bigcup_{t\geq 0} \tilde{\mathcal{A}}(m|n;z+t,t)\text{  and }\tilde{\mathcal{A}}(m|n)_{-z}:=\bigcup_{t\geq 0}\tilde{\mathcal{A}}(m|n;t,z+t)\]
Evidently, these are supercoalgebras. In the aforementioned paper, the authors provide a coalgebra decomposition for the classical case $\tilde{\mathcal{A}}(m|0)$  \cite[2.6.1]{rational}. The super analogue of this result holds as well
\begin{equation}\tilde{\mathcal{A}}(m|n)=\bigoplus_{z\in\Z} \tilde{\mathcal{A}}(m|n)_z \label{GL_co_decomp}\end{equation}
The proof of this is similar to the classical one, which we now briefly outline. The only way to realize $w\in \tilde{\mathcal{A}}(m|n;r,s)$ as an element of $\tilde{\mathcal{A}}(m|n;e,f)$ is to repeatedly use the relations $w(\sum_{k=1}^{m+n}x_{i,k}\tilde{x_{k,i}})=w\cdot 1$ or $(\sum_{k=1}^{m+n}\tilde{x}_{i,k}x_{k,i})w=1\cdot w$. This means there must exist some $t\geq 0$, such that $r+t=e,s+t=f$. Thus, if $r\geq s$ this implies that $w\in \tilde{\mathcal{A}}(m|n)_{r-s}$, and similarly if $s>r$ then $w\in \tilde{\mathcal{A}}(m|n)_{-(s-r)} $. So in either case, $\tilde{\mathcal{A}}(m|n;r,s), \tilde{\mathcal{A}}(m|n;e,f)\subseteq \tilde{\mathcal{A}}(m|n)_{r-s}$. This shows the sum in Equation \ref{GL_co_decomp} is direct. 
 
 \begin{definition}
If $M$ is a $GL(m|n)$-supermodule and $cf(M)\subseteq \tilde{\mathcal{A}}(m|n)_z$, then it is a supermodule of \emph{rational degree $z$}. Furthermore, if $cf(M)\subseteq \tilde{\mathcal{A}}(m|n;r,s)$ and $cf(M)\not\subset\tilde{\mathcal{A}}(m|n;r-1,s-1)$, then it is a supermodule of \emph{bidegree $(r,s)$}.
 \end{definition}

\begin{theorem}{(Compare with \cite[Theorem 2.7]{rational})}
If $V$ is a finite-dimensional $GL(m|n)$-supermodule, then the following holds. 
\begin{enumerate}
\item $V=\bigoplus_{z\in \Z}V_{z}$, where $V_z$ is a supermodule of rational degree $z$.
\item Each $V_{z}$ is the union of an ascending chain of $GL(m|n)$-supermodules as follows:
\begin{enumerate}
\item If $z\geq 0$, then $V_{z,0}\subseteq V_{z+1,1}\subseteq ....\subseteq V_{z+t,t}\subseteq ...$, where $V_{z+t,t}$ is a supermodule of bidegree $(z+t,t)$ and $V_z=\bigcup_{t=0}^{\infty}V_{z,t}$.
\item If $z<0$, then $V_{0,-z}\subseteq V_{1,-z+1}\subseteq ...\subseteq V_{t,-z+t}\subseteq ....$, where $V_{t,-z+t}$ is a supermodule of bidegree $(t,-z+t)$ and $V_z=\bigcup_{t=0}^{\infty}V_{t,-z+t}$.
\end{enumerate}
\end{enumerate}
\end{theorem}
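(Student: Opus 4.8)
The plan is to reduce the theorem to the coalgebra decomposition already established in Equation \ref{GL_co_decomp}, using the dictionary between finite-dimensional $GL(m|n)$-supermodules and finite-dimensional right $k[GL(m|n)]$-supercomodules. First I would recall that for a finite-dimensional supercomodule $V$ with structure map $\tau$, the coefficient space $cf(V)$ is a finite-dimensional sub-supercoalgebra of $k[GL(m|n)]$, and that $\tau(V)\subseteq V\otimes cf(V)$. Since $cf(V)$ is finite-dimensional, the containments $\tilde{\mathcal{A}}(m|n;r,s)\subseteq\tilde{\mathcal{A}}(m|n;r+1,s+1)$ (Lemma \ref{description4}) guarantee that $cf(V)$ already lands in $\tilde{\mathcal{A}}(m|n)$, and by Equation \ref{GL_co_decomp} it decomposes as a finite direct sum over the rational degrees $z\in\Z$. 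This is where the hypothesis that $V$ is rational and finite-dimensional is essential.

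Next I would set $V_z:=\tau^{-1}\bigl(V\otimes\tilde{\mathcal{A}}(m|n)_z\bigr)$, or equivalently extract $V_z$ by composing $\tau$ with the projection $k[GL(m|n)]\twoheadrightarrow\tilde{\mathcal{A}}(m|n)_z$ coming from the direct sum in \ref{GL_co_decomp}. The standard comodule argument — that a comodule over a direct sum of coalgebras splits accordingly — shows $V=\bigoplus_{z\in\Z}V_z$ as superspaces, that each $V_z$ is a subcomodule (hence a $GL(m|n)$-subsupermodule), and that $cf(V_z)\subseteq\tilde{\mathcal{A}}(m|n)_z$, so $V_z$ has rational degree $z$. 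This establishes part (1).

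For part (2), I would use the description of $\tilde{\mathcal{A}}(m|n)_z$ as the ascending union $\bigcup_{t\geq 0}\tilde{\mathcal{A}}(m|n;z+t,t)$ (for $z\geq 0$; symmetrically for $z<0$). Since $cf(V_z)$ is finite-dimensional, it is contained in some single term $\tilde{\mathcal{A}}(m|n;z+t,t)$ of this union. Defining $V_{z+t,t}$ analogously by pulling back $\tilde{\mathcal{A}}(m|n;z+t,t)$ through $\tau$, I would check that these form an ascending chain of subcomodules whose coefficient spaces sit in $\tilde{\mathcal{A}}(m|n;z+t,t)$, exhausting $V_z$; the bidegree statement then follows from the definition of bidegree, possibly after discarding the first few terms so that the minimality condition $cf\not\subset\tilde{\mathcal{A}}(m|n;z+t-1,t-1)$ holds. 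The $z<0$ case is symmetric under swapping the roles of $V$ and $W=V^*$.

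The main obstacle I anticipate is purely formal rather than computational: verifying carefully that the projections and pullbacks through $\tau$ interact correctly with the $\Z/2$-grading, so that each $V_z$ and $V_{z+t,t}$ is genuinely a \emph{sub-supercomodule} (an even subspace closed under $\tau$) and not merely a subspace. Once the splitting of comodules over the coalgebra decomposition \ref{GL_co_decomp} is invoked, the ascending-chain statement is essentially immediate from finite-dimensionality. The super-signs play no role here beyond ensuring that $\tilde{\mathcal{A}}(m|n)_z$ is a sub-supercoalgebra, which Proposition \ref{superco} and the discussion preceding \ref{GL_co_decomp} already supply.
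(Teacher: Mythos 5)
There is a genuine gap at the very first step of your argument: the claim that the containments $\tilde{\mathcal{A}}(m|n;r,s)\subseteq\tilde{\mathcal{A}}(m|n;r+1,s+1)$ together with finite-dimensionality of $cf(V)$ ``guarantee that $cf(V)$ already lands in $\tilde{\mathcal{A}}(m|n)$'' is a non sequitur. For an arbitrary finite-dimensional $GL(m|n)$-supermodule, $cf(V)$ is a priori only a finite-dimensional subspace of $k[GL(m|n)]=\mathcal{A}(m|n)_{d_1d_2}$; the nesting of the pieces $\tilde{\mathcal{A}}(m|n;r,s)$ is an internal statement about the sub-superalgebra $\tilde{\mathcal{A}}(m|n)$ generated by the $x_{i,j}$ and $\tilde{x}_{i,j}$, and says nothing about whether a given element of the localization lies in that subalgebra. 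What you actually need is the equality $\tilde{\mathcal{A}}(m|n)=k[GL(m|n)]$, i.e.\ that the $x_{i,j},\tilde{x}_{i,j}$ generate the whole localized superalgebra. In the super setting this is genuinely nontrivial (the entries of the inverse matrix involve Berezinians, and elements such as $Ber^{\pm 1}$ must be shown to be polynomial in the $x_{i,j},\tilde{x}_{i,j}$), and it is exactly the input the paper supplies as the first step of its proof, quoting Lemma 8.3 of \cite{marko2020}, with a self-contained proof given as Corollary \ref{map_gen} via the Hopf superalgebra isomorphism $k[GL(m|n)]\cong\mathcal{B}$ of Scheunert--Zhang \cite{SCHEUNERT200244}. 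Without this equality, Equation \ref{GL_co_decomp} decomposes only $\tilde{\mathcal{A}}(m|n)$, and your splitting argument would decompose only those supermodules whose coefficient space happens to lie in $\tilde{\mathcal{A}}(m|n)$, not every finite-dimensional $GL(m|n)$-supermodule.

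Apart from this, your route coincides with the paper's: setting $V_z=\tau^{-1}\bigl(V\otimes\tilde{\mathcal{A}}(m|n)_z\bigr)$ and invoking the splitting of a comodule over a direct sum of coalgebras is precisely the paper's appeal to Theorem 1.6c of \cite{Green_result}; your concern about the $\Z/2$-grading is resolved exactly as in the paper, by noting that the preimage of a superspace under the even map $\tau$ is a superspace; and part (2), via finite-dimensionality of $cf(V_z)$ inside the ascending union $\bigcup_{t}\tilde{\mathcal{A}}(m|n;z+t,t)$, follows Dipper--Doty just as the paper (which omits the details for that reason). So the only repair needed is to replace the faulty justification by an appeal to Corollary \ref{map_gen} (or \cite{marko2020}); once $\tilde{\mathcal{A}}(m|n)=k[GL(m|n)]$ is in hand, the rest of your argument goes through.
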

\begin{proof}
Lemma 8.3 from \cite{marko2020}, states that $x_{i,j},\tilde{x}_{i,j}$ for $1\leq i,j\leq m+n$ generate $k[GL(m|n)]$. Thus, $\tilde{\mathcal{A}}(m|n)=k[GL(m|n)]$ (See also Corollary \ref{map_gen} for a self-contained proof of this fact). Similar to Dipper and Doty's proof of Theorem 2.7 in \cite{rational}, by combining Equation \ref{GL_co_decomp} with Theorem 1.6c from \cite{Green_result}  we obtain the $k[GL(m|n)]$-comodule decomposition
\[V=\bigoplus_{z\in \Z}V_z\] 
This comodule decomposition is actually a supercomodule decomposition. For if $\tau:V\rightarrow V\otimes_k k[GL(m|n)]$ denotes the supercomodule structure map, then in Green's proof of Theorem 1.6c, $V_z=\tau^{-1}(V\otimes_k  \tilde{\mathcal{A}}(m|n)_z)$. Since the pre-image of a superspace under an even linear map is a superspace, we see that $V_z$ is a supercomodule. Thus completing the proof of the first statement.

 The proof of the second statement is essentially identically to Dipper and Doty's proof in Theorem 2.7 of \cite{rational}, and so is omitted.

\end{proof}

\begin{definition}
Fix non-negative integers $m,n,r,s$. The \emph{rational Schur superalgebra}, which we denote as $\mathcal{S}(m|n;r,s)$, is $(\tilde{\mathcal{A}}(m|n;r,s))^*$.
\end{definition}
\noindent By Proposition \ref{superco} and Lemma \mbox{\ref{description3}} this will indeed be a finite-dimensional superalgebra. 
\subsection{Equivalence of Categories}
It is well know that if $A$ is a finite-dimensional supercoalgebra, then its category of supercomodules is equivalent to the category of supermodules over $A^*$. As in the classical case we define,

\[\mathcal{S}(m|n)_z:=(\tilde{\mathcal{A}}(m|n)_z)^*\]
This equals the following inverse limit when $z\geq 0$ or $z<0$ respectively:
\[\varprojlim_{t\in\Z_{\geq 0}}\mathcal{S}(m|n;z+t,t),\ \ \ \ \ \ \varprojlim_{t\in\Z_{\geq 0}}\mathcal{S}(m|n;t,-z+t)\]
Since every $GL(m|n)$-supermodule can be decomposed into rational supermodules of varying degrees, this suggests that understanding the category of supercomodules of $\tilde{\mathcal{A}}(m|n)_z$ will give some insight into the representation theory of $GL(m|n)$. As we have just mentioned, this is equivalent to understanding the representation theory of $\mathcal{S}(m|n)_z$. Since this is built up from the finite-dimensional superalgebras, $\mathcal{S}(m|n;r,s)$, the first step is to understand their representation theory.
\section{Equivalence of Definitions and Semisimplicity}
In this section, $k$ will denote an algebraically closed field of characteristic zero, $V=k^{m|n}, W=V^*$, and  $T(r,s)=V^{\otimes r}\otimes W^{\otimes s}$. In the classical case, the symmetric group $S_r$ acts on the space $(k^n)^{\otimes r}$, and the Schur algebra $S(n,r)$ may be defined as $\operatorname{End}_{S_r}((k^n)^{\otimes r})$. This definition is equivalent to taking the dual of the coalgebra given by degree-$r$ homogeneous polynomials in $k[GL(n)]$. The Schur algebra $S(n,r)$ is also isomorphic to the image of the canonical $\gl(n)$ representation $\rho_r: U(\gl(n))\rightarrow \operatorname{End}((k^n)^{\otimes r})$. Thus, a Schur algebra may be defined through any of the three equivalent definitions. 
This section aims to demonstrate a similar equivalence of definitions for $\mathcal{S}(m|n;r,s)$. 
 First, we prove that $k[GL(m|n)]$ is isomorphic to the Hopf superalgebra of regular functions on $GL(m|n)$, which was introduced in \cite{SCHEUNERT200244}. This allows us to demonstrate that $\mathcal{S}(m|n;r,s)$ is isomorphic to the image of the universal enveloping superalgebra $U(\gl(m|n))$ under its natural action on $T(r,s)$. If $m-n\geq r+s$, then the semisimplicity of this image is shown by proving $T(r,s)$ is an irreducible $\mathcal{S}(m|n;r,s)$-supermodule on which the superalgebra acts faithfully. Lastly, it is known that the walled Brauer algebra $B_{r,s}(m-n)$ acts naturally on $T(r,s)$, and when $m-n\geq r+s$ it is isomorphic to $\operatorname{End}_{\gl(m|n)}(T(r,s))$ \cite{schurweyl}. The semisimplicity result in combination with this Schur-Weyl duality then implies through the double centralizer theorem that $\mathcal{S}(m|n;r,s)\cong \operatorname{End}_{B_{r,s}(m-n)}(T(r,s))$.

\subsection{The Hopf Superalgebra of Regular Functions on $GL(m|n)$}

 The Lie superalgebra $\frak{gl}(m|n)$ acts naturally on $T(r,s)$ through the following action defined on homogeneous elements:
\begin{equation*}
\begin{split}x\cdot (v_1\otimes ...\otimes v_r\otimes w_1\otimes...\otimes w_s)&=
\sum_{k=1}^{r}(-1)^{|x|\sum_{p=1}^k|v_p|}v_1\otimes ...\otimes (x\cdot v_k)\otimes...\otimes v_r\otimes ...\otimes w_s\\
&+\sum_{k=1}^{s}(-1)^{|x|(\sum_{p=1}^r|v_p|+\sum_{t=1}^{k}|w_t|)}v_1\otimes ...\otimes v_r\otimes...\otimes (x\cdot w_k)\otimes ...\otimes w_s 
\end{split}
\end{equation*}
We denote this representation by $\rho_{r,s}:U(\gl(m|n))\rightarrow \operatorname{End}_k(T(r,s))$.
\begin{definition}{\cite{SCHEUNERT200244}}

The \emph{Hopf superalgebra of regular functions on $GL(m|n)$} is the sub-superalgebra of $(U(\frak{gl}(m|n)))^*$ generated by: 
\begin{equation*}cf_{U(\frak{gl}(m|n))}(V)=\operatorname{span}_k\{t_{i,j}:1\leq i,j\leq m+n\}\quad \text{and} \quad cf_{U(\gl(m|n))}(W)=\operatorname{span}_k\{\overline{t}_{i,j}:1\leq i,j\leq m+n\}\end{equation*} 
\end{definition}

We denote this sub-superalgebra by $\mathcal{B}$. In \cite{SCHEUNERT200244} it was demonstrated that $\mathcal{B}$ is a Hopf superalgebra. We briefly recall its superbialgebra structure maps. Let $\Delta_0$ and $\epsilon_0$ be the canonical comultiplication   and counit structure maps on $U(\frak{gl}(m|n))$, respectively.
Let $f,g\in U(\frak{gl}(m|n))^{*}$ and $a,b\in U(\frak{gl}(m|n))$. The unit, multiplication, comultiplication, and counit of $\mathcal{B}$ are then all given respectively by:
\begin{equation*}1_{U(\frak{gl}(m|n))^*}:=\epsilon_0^*, \quad\langle m(f\otimes g),a\rangle := \langle f\otimes g, \Delta_0(a)\rangle,\quad \langle \Delta(f),a\otimes b\rangle :=\langle f, ab\rangle, \quad \epsilon(-):= \langle -,1_{U(\frak{gl}(m|n))}\rangle \end{equation*}

Furthermore,  let $B$ denote the superalgebra generated by $\{t_{i,j}:1\leq i,j\leq m+n \}$. It was shown in \cite[Theorem 2.1f]{muir1991polynomial} that $B$ is isomorphic to $\mathcal{A}(m|n)$ as a superbialgebra by sending $t_{i,j}\mapsto x_{i,j}$. Although, comultiplication on $\mathcal{A}(m|n)$ in this case is defined by:
\[\Delta '(x_{i,j}):=\sum_{h=1}^{m+n}(-1)^{(|i|+|h|)\cdot(|h|+|j|)}x_{i,h}\otimes x_{h,j}\]
 Denote $\mathcal{A}(m|n)$ with the superbialgebra structure given in our context by $(\mathcal{A}(m|n),\Delta)$, and the superbialgebra structure in \cite{muir1991polynomial} by $(\mathcal{A}(m|n),\Delta')$. It may be easily checked that the superalgebra isomorphism 
 \[f:(\mathcal{A}(m|n),\Delta)\rightarrow (\mathcal{A}(m|n),\Delta'),\quad x_{i,j}\mapsto (-1)^{|j|(|i|+|j|)}x_{i,j}\]
 is also a supercoalgebra isomorphism. Thus, $(\mathcal{A}(m|n),\Delta)$ and $B$ are isomorphic as superbialgebras. If $\alpha:=\det((t_{i,j})_{1\leq i,j\leq m})$ and $\beta:=\det((t_{i,j})_{m+1\leq i,j\leq m+n})$, then by localizing $B$ at $\alpha\cdot \beta$ it follows that $k[GL(m|n)]$ is isomorphic to $B_{\alpha\cdot \beta}$ as superbialgebras. Since a superbialgebra isomorphism of Hopf superalgebras is a Hopf superalgebra isomorphism, this implies that $k[GL(m|n)]$ is isomorphic to $B_{\alpha\cdot \beta}$. From \cite[Theorem 3.3]{SCHEUNERT200244} we see that $B_{\alpha\cdot \beta}$ is isomorphic to $\mathcal{B}$ as a Hopf superalgebra. So, we have the following lemma:
 \begin{lemma}
 The coordinate superalgebra $k[GL(m|n)]$ is isomorphic to $\mathcal{B}$ as a Hopf superalgebra.
 \end{lemma}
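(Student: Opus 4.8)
The plan is to realize $k[GL(m|n)]$ and $\mathcal{B}$ as the two ends of a short chain of isomorphisms passing through the localized superbialgebra $B_{\alpha\cdot\beta}$, verifying at each link that the map respects the full Hopf structure. Since all the ingredients have been assembled in the discussion above, the proof reduces to composing these isomorphisms in the right order and checking that the sign conventions match up.

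First I would record the superbialgebra isomorphism $(\mathcal{A}(m|n),\Delta)\cong B$. By \cite[Theorem 2.1f]{muir1991polynomial} the assignment $x_{i,j}\mapsto t_{i,j}$ identifies $(\mathcal{A}(m|n),\Delta')$ with $B$ as superbialgebras, where $\Delta'$ carries the extra sign $(-1)^{(|i|+|h|)(|h|+|j|)}$. The twist $f\colon(\mathcal{A}(m|n),\Delta)\to(\mathcal{A}(m|n),\Delta')$, $x_{i,j}\mapsto(-1)^{|j|(|i|+|j|)}x_{i,j}$, is manifestly a superalgebra isomorphism; the substantive point is that it intertwines the two comultiplications, $(f\otimes f)\circ\Delta=\Delta'\circ f$, which is a direct sign computation on the generators that I would carry out and then extend multiplicatively. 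Composing $f$ with Muir's identification yields the desired superbialgebra isomorphism $(\mathcal{A}(m|n),\Delta)\cong B$.

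Next I would pass to localizations. Under this isomorphism the even elements $d_1=\det((x_{i,j})_{1\le i,j\le m})$ and $d_2=\det((x_{i,j})_{m+1\le i,j\le m+n})$ are carried to $\alpha$ and $\beta$ (the twist $f$ being trivial on the purely even and purely odd blocks), so inverting $d_1 d_2$ corresponds to inverting $\alpha\cdot\beta$. By the universal property of localization of a supercommutative superbialgebra, the isomorphism extends uniquely to a superbialgebra isomorphism $k[GL(m|n)]=\mathcal{A}(m|n)_{d_1 d_2}\cong B_{\alpha\cdot\beta}$, with the localized comultiplication and counit on each side agreeing. Since both objects are Hopf superalgebras and the antipode is the convolution inverse of the identity---hence uniquely determined by the superbialgebra structure---any superbialgebra isomorphism between them automatically commutes with the antipodes, so this is a Hopf superalgebra isomorphism. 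Finally, \cite[Theorem 3.3]{SCHEUNERT200244} provides a Hopf superalgebra isomorphism $B_{\alpha\cdot\beta}\cong\mathcal{B}$, and composing the whole chain gives $k[GL(m|n)]\cong\mathcal{B}$.

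I expect the only genuine obstacle to be the $\Z/2$-sign bookkeeping: checking that $f$ intertwines $\Delta$ and $\Delta'$, and tracking the signs through the localization so that the generators and their inverses are matched correctly. Everything else is formal---the passage from a superbialgebra isomorphism to a Hopf superalgebra isomorphism follows from uniqueness of the antipode, and the two endpoint identifications are exactly the cited theorems of Muir and of Scheunert and Zhang.
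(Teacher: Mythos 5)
Your proposal is correct and follows essentially the same route as the paper: the chain $k[GL(m|n)]=\mathcal{A}(m|n)_{d_1d_2}\cong B_{\alpha\cdot\beta}\cong\mathcal{B}$, using Muir's identification together with the sign-twist $f$ to reconcile $\Delta$ with $\Delta'$, passing to localizations, and upgrading the superbialgebra isomorphism to a Hopf superalgebra isomorphism via uniqueness of the antipode, with the final link supplied by \cite[Theorem 3.3]{SCHEUNERT200244}. The only additions beyond the paper's argument are welcome explicit checks (that $f$ fixes the diagonal blocks so $d_1d_2\mapsto\alpha\cdot\beta$, and the appeal to the universal property of localization), which the paper leaves implicit.
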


This lemma provides an alternative proof to Lemma $8.3$ from \cite{marko2020}. 

\begin{corollary}
\label{map_gen}
The coordinate superalgebra $k[GL(m|n)]$ is generated by $x_{i,j},\tilde{x}_{i,j}$ for $1\leq i,j\leq m+n$. So, $k[GL(m|n)]=\tilde{\mathcal{A}}(m|n)$.
\end{corollary}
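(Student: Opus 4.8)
The plan is to deduce everything from the preceding Hopf superalgebra isomorphism $k[GL(m|n)] \cong \mathcal{B}$. First I would recall that, by definition, $\mathcal{B}$ is generated as a superalgebra by the two coefficient spaces $cf_{U(\gl(m|n))}(V) = \operatorname{span}_k\{t_{i,j}\}$ and $cf_{U(\gl(m|n))}(W) = \operatorname{span}_k\{\overline{t}_{i,j}\}$. Hence, to show that $k[GL(m|n)]$ is generated by the $x_{i,j}$ and $\tilde{x}_{i,j}$, it suffices to trace the images of these two spanning sets through the isomorphism chain assembled above.

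Next I would track the generators through that chain. The composite $B \cong (\mathcal{A}(m|n),\Delta)$, followed by localization at $\alpha\beta$, sends each $t_{i,j}$ to a nonzero scalar multiple of $x_{i,j}$ (the sign being supplied by the map $f$); thus the image of $cf_{U(\gl(m|n))}(V)$ is exactly $\operatorname{span}_k\{x_{i,j}\} = cf(V)$. For the second family, the key point is that the isomorphism respects comultiplication, so it carries supercomodules to supercomodules and hence coefficient spaces to coefficient spaces. The space $cf_{U(\gl(m|n))}(W)$, being the coefficient space of the dual representation $W = V^*$, must therefore map onto $cf(W)$ inside $k[GL(m|n)]$, which by Lemma \ref{description2} is precisely $\operatorname{span}_k\{\tilde{x}_{i,j}\}$. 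Consequently the images of the generators of $\mathcal{B}$ are scalar multiples of the $x_{i,j}$ and $\tilde{x}_{i,j}$, so these elements generate $k[GL(m|n)]$ as a superalgebra.

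Finally I would close the argument by comparing with $\tilde{\mathcal{A}}(m|n)$. By Lemma \ref{description1} the superalgebra $\tilde{\mathcal{A}}(m|n)$ is generated by the same elements $x_{i,j}, \tilde{x}_{i,j}$, and it is contained in $k[GL(m|n)]$ by construction. Since both superalgebras are generated by the identical set of elements, they coincide, yielding $k[GL(m|n)] = \tilde{\mathcal{A}}(m|n)$.

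I expect the only genuine obstacle to be the middle step: verifying cleanly that the abstract isomorphism carries the barred generators $\overline{t}_{i,j}$ onto (scalar multiples of) the $\tilde{x}_{i,j}$, rather than merely into $\tilde{\mathcal{A}}(m|n)$. This amounts to matching the dual-representation and supertranspose conventions used to define $cf(W)$ on each side and confirming that the signs introduced by $f$ and by the localization are consistent. Everything else is formal, since the generation of $\mathcal{B}$ and the identifications of $cf(V), cf(W)$ with the spans of $x_{i,j}$ and $\tilde{x}_{i,j}$ are already established.
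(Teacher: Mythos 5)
Your overall route coincides with the paper's: transport the generators of $\mathcal{B}$ back through the Hopf superalgebra isomorphism, identify their preimages as (scalar multiples of) the $x_{i,j}$ and $\tilde{x}_{i,j}$, and conclude with Lemma \ref{description1}. However, the step you flag as the ``only genuine obstacle'' is in fact the entire substance of the corollary, and your comultiplication-based justification does not by itself close it. A superbialgebra isomorphism does transport the $\mathcal{B}$-supercomodule structure on $W$ to \emph{some} $k[GL(m|n)]$-supercomodule structure whose coefficient space is the preimage of $\operatorname{span}_k\{\overline{t}_{i,j}\}$; but identifying that transported comodule with the dual comodule $V^*$, whose coefficient space is $\operatorname{span}_k\{\tilde{x}_{i,j}\}$ by Lemma \ref{description2}, requires compatibility of the isomorphism with duals, and duals on both sides are formed via the respective antipodes. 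That is a Hopf-superalgebra statement, not merely a bialgebra one, and it is precisely what remains unproved in your sketch.

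The paper closes exactly this gap with an explicit antipode computation: by \cite[pg.~55]{SCHEUNERT200244}, applying the antipode $s'$ of $\mathcal{B}$ entrywise to the matrix $(\tilde{t}_{i,j})_{1\leq i,j\leq m+n}=((h\circ f)(x_{i,j}))_{1\leq i,j\leq m+n}$ yields its inverse, whose entries are precisely the barred generators $\overline{t}_{i,j}$; since $h\circ f$ is a morphism of Hopf superalgebras it commutes with antipodes, and the antipode of $k[GL(m|n)]$ sends $x_{i,j}$ to $\tilde{x}_{i,j}$ (the entries of the inverse matrix, as in Section 3.2). Combining these gives $(h\circ f)(\tilde{x}_{i,j})=\overline{t}_{i,j}$ exactly, with no residual scalars to chase, after which generation of $k[GL(m|n)]$ by the $x_{i,j},\tilde{x}_{i,j}$ and the equality $k[GL(m|n)]=\tilde{\mathcal{A}}(m|n)$ follow as in your final paragraph. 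To complete your proposal, replace the appeal to ``coefficient spaces map to coefficient spaces'' by this antipode identity, or by an equivalent general argument that a Hopf superalgebra isomorphism intertwines the dual comodule of $V$ with the dual module structure used to define $cf_{U(\gl(m|n))}(W)$.
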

\begin{proof}
Note, the Hopf superalgebra isomorphism given in \cite[Theorem 3.3]{SCHEUNERT200244} \[h:B_{\alpha\cdot \beta}\rightarrow \mathcal{B}\] sends $(-1)^{|j|(|i|+|j|)}x_{i,j}$ to $\tilde{t}_{i,j}:=(-1)^{|j|(|i|+|j|)}t_{i,j}$. Let $s'$ denote the antipode of $\mathcal{B}$. Then recalling \cite[pg.55]{SCHEUNERT200244}, we see
\[(s'((h\circ f)(x_{i,j})))_{1\leq i,j\leq m+n}= (\tilde{t}_{i,j})_{1\leq i,j\leq m+n}^{-1}\]
Since $h,f$ are Hopf superalgebra morphisms this implies:
\[(\tilde{t}_{i,j})_{1\leq i,j\leq m+n}^{-1}=((h\circ f)(s(x_{i,j})))_{1\leq i,j\leq m+n}=((h\circ f)(\tilde{x}_{i,j}))_{1\leq i,j\leq m+n}\]
But, 
\[(\tilde{t}_{i,j})_{1\leq i,j\leq m+n}^{-1}=(\overline{t}_{i,j})_{1\leq i,j\leq m+n}\]
Therefore, 
\[\overline{t}_{i,j}=(h\circ f)(\tilde{x}_{i,j})\quad\text{for}\quad 1\leq i,j\leq m+n\]
Since $\{t_{i,j},\overline{t}_{i,j}:1\leq i,j\leq m+n\}$ generates $\mathcal{B}$, this implies $\{x_{i,j},\tilde{x}_{i,j}:1\leq i,j\leq m+n\}$ generates $k[GL(m|n)]$. Recalling, Lemma \ref{description1} we see $k[GL(m|n)]=\tilde{\mathcal{A}}(m|n)$.
\end{proof}
\subsection{First equivalence of Definitions}

To show that a rational Schur superalgebra $\mathcal{S}(m|n;r,s)$ is isomorphic to $\rho_{r,s}(U(\gl(m|n)))$ we need the following result:
\begin{proposition}
\label{cf_iso}
As supercoalgebras $\tilde{\mathcal{A}}(m|n;r,s)$ is isomorphic to $\bold{cf}_{U(\frak{gl}(m|n))}(T(r,s))$. 
\end{proposition}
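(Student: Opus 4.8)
The plan is to transport the explicit Hopf superalgebra isomorphism $\Phi := h\circ f\colon k[GL(m|n)]\to\mathcal{B}$ constructed in the proof of Corollary \ref{map_gen} and to show that it restricts to the desired supercoalgebra isomorphism. Recall from that proof that $\Phi$ sends $x_{i,j}\mapsto(-1)^{|j|(|i|+|j|)}t_{i,j}$ and $\tilde{x}_{i,j}\mapsto\overline{t}_{i,j}$. Being a Hopf superalgebra isomorphism, $\Phi$ is in particular an even, bijective supercoalgebra morphism. By Proposition \ref{superco}, $\tilde{\mathcal{A}}(m|n;r,s)$ is a sub-supercoalgebra of $k[GL(m|n)]$, and the restriction of a coalgebra isomorphism to a subcoalgebra is automatically a supercoalgebra isomorphism onto its image (which is then itself a subcoalgebra). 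Hence it suffices to prove the set-theoretic equality $\Phi(\tilde{\mathcal{A}}(m|n;r,s))=\mathbf{cf}_{U(\gl(m|n))}(T(r,s))$; write $cf_U(-)$ for $cf_{U(\gl(m|n))}(-)$ in what follows.

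First I would describe both spaces by spanning monomials. By Lemma \ref{description3}, $\tilde{\mathcal{A}}(m|n;r,s)$ is spanned by monomials in the $x_{i,j}$ and $\tilde{x}_{i,j}$ having exactly $r$ factors of the first type and $s$ of the second. Since $\Phi$ is an algebra homomorphism matching generators up to nonzero scalars, it carries each such monomial to a scalar multiple of a monomial in the $t_{i,j}$ and $\overline{t}_{i,j}$ with $r$ and $s$ factors respectively, and conversely every such monomial arises this way. Thus $\Phi(\tilde{\mathcal{A}}(m|n;r,s))$ is exactly the span of all monomials of type $(r,s)$ in the $t_{i,j},\overline{t}_{i,j}$.

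It then remains to identify this span with $\mathbf{cf}_{U(\gl(m|n))}(T(r,s))$. For this I would invoke the multiplicativity of coefficient spaces for $U(\gl(m|n))$-modules, namely $cf_U(M\otimes N)=cf_U(M)\cdot cf_U(N)$, the product being taken in $U(\gl(m|n))^{*}$. This is the module analogue of the comodule identity $cf(M\otimes_k N)=cf(M)\cdot cf(N)$ recalled in Section 2, and it holds because the $U(\gl(m|n))$-action on $M\otimes N$ is defined through the comultiplication $\Delta_0$, which is precisely dual to the convolution product on $U(\gl(m|n))^{*}$; the Koszul signs that appear in the super setting change individual coefficient functions only by nonzero scalars and so do not affect the resulting span. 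Applying this with $M,N$ ranging over copies of $V$ and $W$, and using $cf_U(V)=\operatorname{span}_k\{t_{i,j}\}$ and $cf_U(W)=\operatorname{span}_k\{\overline{t}_{i,j}\}$, yields $\mathbf{cf}_{U(\gl(m|n))}(T(r,s))=(cf_U(V))^{r}(cf_U(W))^{s}$, which is exactly the span of type-$(r,s)$ monomials computed above. This gives the equality $\Phi(\tilde{\mathcal{A}}(m|n;r,s))=\mathbf{cf}_{U(\gl(m|n))}(T(r,s))$ and finishes the argument.

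The step I expect to be the main obstacle is justifying this multiplicativity of $U(\gl(m|n))$-module coefficient spaces cleanly in the super setting: one must verify that the convolution product in $U(\gl(m|n))^{*}$ genuinely sends products of coefficient functions of $V$ and of $W$ to coefficient functions of $V^{\otimes r}\otimes W^{\otimes s}$, carefully tracking the signs coming from $\Delta_0$ and from permuting homogeneous tensor factors, and checking that these products indeed land in $\mathcal{B}$. Everything else is formal once the explicit form of $\Phi$ from Corollary \ref{map_gen} and the monomial description in Lemma \ref{description3} are in hand.
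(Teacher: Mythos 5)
Your proposal is correct and follows essentially the same route as the paper: both transport the Hopf superalgebra isomorphism of Corollary \ref{map_gen}, use the monomial description in Lemma \ref{description3}, and reduce the claim to the multiplicativity $cf_{U(\gl(m|n))}(A\otimes B)=cf_{U(\gl(m|n))}(A)\cdot cf_{U(\gl(m|n))}(B)$ applied to tensor powers of $V$ and $W$. Your write-up is in fact somewhat more careful than the paper's, which asserts the multiplicativity fact without comment, whereas you correctly flag the convolution-product/Koszul-sign verification as the one step needing real work.
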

\begin{proof}
By the isomorphism given in Corollary \ref{map_gen} and Lemma \ref{description3}, it suffices to show that 
\begin{equation*}\bold{cf}_{U(\frak{gl}(m|n))}(T(r,s))=\operatorname{span}\{t_{i_1,j_1}^{m_1}...t_{i_p,j_p}^{m_p}\overline{t}_{i_{p+1},j_{p+1}}^{m_{p+1}}...\overline{t}_{i_{p+\ell},j_{p+\ell}}^{m_{p+\ell}}: \sum_{k=1}^{p}m_k=r,\sum_{k=1}^{\ell}m_{p+k}=s\}\end{equation*}

Recalling the definition of $\mathcal{B}$ this follows immediately from the following fact: If $A,B$ are finite-dimensional representations of $\frak{gl}(m|n)$, then 
\[ \bold{cf}_{U(\frak{gl}(m|n))}(A\otimes B)= \bold{cf}_{U(\frak{gl}(m|n))}(A)\cdot  \bold{cf}_{U(\frak{gl}(m|n))}(B)\] 

\end{proof}

\begin{theorem}{(Compare with \cite[Theorem 3.2]{SCHEUNERT200244})}
The rational Schur superalgebra $\mathcal{S}(m|n;r,s)$ is isomorphic to $\rho_{r,s}(U(\frak{gl}(m|n)))$.

\end{theorem}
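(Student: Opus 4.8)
The plan is to deduce the isomorphism by dualizing the supercoalgebra isomorphism of Proposition \ref{cf_iso} and then invoking Lemma \ref{coeff_alg_iso}; no new computation beyond bookkeeping of parities is needed. First I would record the two objects whose duals I want to compare. By Proposition \ref{superco} together with Lemma \ref{description3}, the space $\tilde{\mathcal{A}}(m|n;r,s)$ is a finite-dimensional supercoalgebra, so its linear dual $\mathcal{S}(m|n;r,s)=(\tilde{\mathcal{A}}(m|n;r,s))^*$ is by definition a finite-dimensional superalgebra, with multiplication the transpose of the comultiplication and unit the transpose of the counit. The coefficient space $\bold{cf}_{U(\frak{gl}(m|n))}(T(r,s))$ is likewise a finite-dimensional supercoalgebra.

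Next I would dualize. Proposition \ref{cf_iso} supplies a supercoalgebra isomorphism
\[\varphi:\tilde{\mathcal{A}}(m|n;r,s)\xrightarrow{\ \sim\ }\bold{cf}_{U(\frak{gl}(m|n))}(T(r,s)).\]
Applying the functor $(-)^*$ turns this into a superalgebra isomorphism of the dual superalgebras: the transpose $\varphi^*$ is even and bijective, and because $\varphi$ intertwines the comultiplications and counits, $\varphi^*$ intertwines the dual multiplications and units. Here finite-dimensionality is used so that the canonical even map $(\tilde{\mathcal{A}}(m|n;r,s)\otimes\tilde{\mathcal{A}}(m|n;r,s))^*\cong(\tilde{\mathcal{A}}(m|n;r,s))^*\otimes(\tilde{\mathcal{A}}(m|n;r,s))^*$ is an isomorphism. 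This yields
\[\mathcal{S}(m|n;r,s)=(\tilde{\mathcal{A}}(m|n;r,s))^*\cong\bigl(\bold{cf}_{U(\frak{gl}(m|n))}(T(r,s))\bigr)^*.\]

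Finally I would apply Lemma \ref{coeff_alg_iso} to the representation $\rho_{r,s}:U(\frak{gl}(m|n))\rightarrow\operatorname{End}_k(T(r,s))$, which identifies $(\bold{cf}_{U(\frak{gl}(m|n))}(T(r,s)))^*$ with $\rho_{r,s}(U(\frak{gl}(m|n)))$ as superalgebras. Composing the two isomorphisms gives the desired $\mathcal{S}(m|n;r,s)\cong\rho_{r,s}(U(\frak{gl}(m|n)))$. The only genuinely delicate point is the claim that dualization carries a supercoalgebra isomorphism to a superalgebra isomorphism in the signed setting, that is, that $\varphi^*$ respects parity and the Koszul signs built into the dual multiplication; I expect this to be the main obstacle, and it is precisely where finite-dimensionality is essential. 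Everything else is an immediate concatenation of Proposition \ref{cf_iso} and Lemma \ref{coeff_alg_iso}.
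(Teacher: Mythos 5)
Your proposal is correct and takes essentially the same route as the paper, whose entire proof is to combine Proposition \ref{cf_iso} with Lemma \ref{coeff_alg_iso}; your dualization step, including the use of finite-dimensionality to identify $(\tilde{\mathcal{A}}(m|n;r,s)\otimes\tilde{\mathcal{A}}(m|n;r,s))^*$ with the tensor product of duals, is exactly the bookkeeping the paper leaves implicit in the word ``immediately.''
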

\begin{proof}
This follows immediately from combining Proposition \ref{cf_iso} and Lemma \ref{coeff_alg_iso}.
\end{proof}

\subsection{Rational Schur Superalgebras as Centralizer Algebras and a Semisimplicity Result}

Recall that if $m\geq n$, then the walled Brauer algebra $B_{r,s}(m-n)$ acts on $T(r,s)$, and when $m-n\geq r+s$ it is isomorphic to $\operatorname{End}_{U(\frak{gl}(m|n))}(T(r,s))$ \cite{schurweyl}. 
Proceeding similarly to the classical case we will demonstrate that a rational Schur superalgebra is isomorphic to the centralizer of the action of a walled Brauer algebra, or in symbols
\[\mathcal{S}(m|n;r,s)\cong \operatorname{End}_{B_{r,s}(m-n)}(T(r,s))\]
By the previous section, it suffices to show that $\mathcal{S}(m|n;r,s)$ is semisimple, as once this has been proven we may apply the double centralizer theorem \cite[Theorem 11.1.1]{MR2906817} to obtain the desired isomorphism. To do this it suffices to find a semisimple faithful representation of $\mathcal{S}(m|n;r,s)$. A natural candidate for this is $T(r,s)$.

It is known that the $T(r,s)$ decomposes as a $\gl(m|n)$-supermodule into indecomposable sub-supermodules and hence indecomposable $\mathcal{S}(m|n;r,s)$ sub-supermodules. Brundan and Stroppel classified the isomorphism types of these indecomposables in \cite{BRUNDAN} through $(m|n)$-cross bi-partitions. 
\begin{definition}
If $\lambda_L$ is a partition of $r$ and $\lambda_R$ a partition of $s$, then $(\lambda_L,\lambda_R)$ is a \emph{$(m|n)$-cross bi-partition of $(r,s)$} if there exists a $1\leq i\leq m+1$ such that $\lambda^L_i+\lambda^R_{m+2-i}<n+1$. 
\end{definition}

The set of $(m|n)$-cross bi-partitions of $(r,s)$ is denoted by $\Lambda_{r,s}$.
\begin{theorem}{\cite[Theorem 8.19]{BRUNDAN}}
The indecomposable $\mathcal{S}(m|n;r,s)$ sub-supermodules of $T(r,s)$ are parameterized up to isomorphism by the $(m|n)$-cross bi-partitions of $(r,s)$. 
\label{classify}
\end{theorem}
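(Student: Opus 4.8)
The plan is to reduce the statement to the classification of indecomposable $\gl(m|n)$-summands of the mixed tensor module $T(r,s)$ and then to identify the resulting index set with $\Lambda_{r,s}$.

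The first and essential step uses the machinery developed here. By the isomorphism $\mathcal{S}(m|n;r,s)\cong \rho_{r,s}(U(\gl(m|n)))$ established above, together with the way Lemma \ref{coeff_alg_iso} produces it, the action of $\mathcal{S}(m|n;r,s)$ on $T(r,s)$ is identified with the action of $\rho_{r,s}(U(\gl(m|n)))$; both are realized as the same subalgebra of $\operatorname{End}_k(T(r,s))$. Hence a homogeneous subspace of $T(r,s)$ is $\mathcal{S}(m|n;r,s)$-stable if and only if it is $\gl(m|n)$-stable, so the lattices of sub-supermodules coincide and so do the corresponding notions of indecomposability. It therefore suffices to classify the indecomposable $\gl(m|n)$-sub-supermodules, equivalently the indecomposable direct summands, of $T(r,s)$.

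To classify those summands I would pass to the endomorphism algebra $\bar E:=\operatorname{End}_{\gl(m|n)}(T(r,s))$. Since $T(r,s)$ is a finite-dimensional $(\gl(m|n),\bar E)$-bimodule, the Krull--Schmidt theorem together with the standard correspondence between indecomposable summands and conjugacy classes of primitive idempotents in $\bar E$ yields a bijection between the isomorphism classes of indecomposable $\gl(m|n)$-summands of $T(r,s)$ and the isomorphism classes of simple $\bar E$-modules. By \cite{schurweyl} the walled Brauer algebra $B_{r,s}(m-n)$ surjects onto $\bar E$, so the simple $\bar E$-modules form a subset of the simple $B_{r,s}(m-n)$-modules; the latter are indexed by certain bipartitions $(\lambda_L,\lambda_R)$ with $|\lambda_L|\le r$ and $|\lambda_R|\le s$. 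The remaining task is to determine precisely which of these simples survive in the quotient $\bar E$ and to match that set with $\Lambda_{r,s}$.

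The main obstacle is exactly this last combinatorial identification. In the stable range $m-n\ge r+s$ one has $\bar E=B_{r,s}(m-n)$ with $T(r,s)$ semisimple, so every bipartition of the correct degrees occurs and the cross condition is automatically met; outside that range $\bar E$ is a proper quotient and one must control which cell modules of the walled Brauer algebra are annihilated on $T(r,s)$. Carrying this out rigorously requires the weight-diagram and cup-diagram calculus and the identification of $\bar E$ with a generalized Khovanov arc algebra, which is the substance of Brundan and Stroppel's analysis; for this step I would invoke \cite[Theorem 8.19]{BRUNDAN} rather than reproduce it. The genuinely new content is the reduction of the second paragraph, which transports their $\gl(m|n)$-classification verbatim onto the rational Schur superalgebra $\mathcal{S}(m|n;r,s)$.
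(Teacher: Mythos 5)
Your proposal is correct and takes essentially the same route as the paper: the paper states this result purely as a citation of \cite[Theorem 8.19]{BRUNDAN}, relying implicitly on exactly your reduction, namely that $\mathcal{S}(m|n;r,s)$ and $U(\gl(m|n))$ have the same image in $\operatorname{End}_k(T(r,s))$ (via $\mathcal{S}(m|n;r,s)\cong\rho_{r,s}(U(\gl(m|n)))$ and Lemma \ref{coeff_alg_iso}), so the two lattices of sub-supermodules of $T(r,s)$ coincide. The walled-Brauer/primitive-idempotent scaffolding in your middle paragraph is sound but extra exposition, since, like the paper, you ultimately delegate the combinatorial classification to Brundan--Stroppel.
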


Suppose $\lambda\in \Lambda_{r,s}$, and let $R(\lambda)$ denote the unique (up to isomorphism) $\frak{gl}(m|n)$ sub-supermodule of $T(r,s)$ associated to $\lambda$. In \cite{Heidersdorf}, Heidersdorf associates to each such bi-partition an invariant $d(\lambda)$ and shows that $R(\lambda)$ is an irreducible $\frak{gl}(m|n)$-supermodule if and only if $d(\lambda)=0$. Furthermore, Proposition 10.3 of the same paper states that if $ m-n\geq r+s$, then $d(\lambda)=0$. Since we are assuming $ m-n\geq r+s$ so that $B_{r,s}(m-n)$ is isomorphic to $\operatorname{End}_{U(\frak{gl}(m|n))}(T(r,s))$, we may conclude that all indecomposable $\frak{gl}(m|n)$ sub-supermodules of $T(r,s)$ are also irreducible. Summarizing this discussion we have:
\begin{lemma}
If $m-n\geq r+s$, then $T(r,s)$ is a semisimple $\frak{gl}(m|n)$-supermodule. 
\end{lemma}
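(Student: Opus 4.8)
The plan is to assemble the structural decomposition of $T(r,s)$ together with the classification and irreducibility criteria already recalled above. First I would invoke the fact that $T(r,s)$ decomposes as a $\gl(m|n)$-supermodule into a direct sum of indecomposable sub-supermodules. By Theorem \ref{classify}, these indecomposable summands are parameterized up to isomorphism by the $(m|n)$-cross bi-partitions $\lambda\in\Lambda_{r,s}$, so I may write each summand as $R(\lambda)$ for an appropriate $\lambda$.

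Next I would appeal to Heidersdorf's invariant $d(\lambda)$, which detects irreducibility in the sense that $R(\lambda)$ is an irreducible $\gl(m|n)$-supermodule precisely when $d(\lambda)=0$. The decisive input is the standing hypothesis $m-n\geq r+s$, under which Proposition 10.3 of \cite{Heidersdorf} guarantees that $d(\lambda)=0$ for every $\lambda\in\Lambda_{r,s}$. Combining this vanishing with the irreducibility criterion, I conclude that each indecomposable summand $R(\lambda)$ appearing in $T(r,s)$ is in fact irreducible.

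Finally I would conclude that $T(r,s)$, being a direct sum of irreducible $\gl(m|n)$-supermodules, is semisimple, which is exactly the assertion of the lemma.

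As for the main obstacle: the substantive content is packaged entirely in the cited results — the indecomposable decomposition of $T(r,s)$, the Brundan--Stroppel parameterization of Theorem \ref{classify}, and Heidersdorf's vanishing of $d(\lambda)$ — so the proof itself is largely an assembly of these facts rather than a fresh calculation. The one point that genuinely demands care is verifying that the hypothesis $m-n\geq r+s$ of this lemma coincides with the range in which Heidersdorf's Proposition 10.3 applies, so that the passage from \emph{indecomposable} to \emph{irreducible} is justified uniformly across all summands $R(\lambda)$ at once; once that alignment is confirmed, the semisimplicity follows immediately.
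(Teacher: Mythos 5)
Your proposal is correct and follows exactly the paper's own argument: the paper likewise decomposes $T(r,s)$ into indecomposable $\gl(m|n)$-summands, invokes the Brundan--Stroppel parameterization by $(m|n)$-cross bi-partitions, and then uses Heidersdorf's criterion ($R(\lambda)$ irreducible iff $d(\lambda)=0$) together with his Proposition 10.3 (that $m-n\geq r+s$ forces $d(\lambda)=0$) to conclude every summand is irreducible. Your closing caution about matching the hypothesis $m-n\geq r+s$ to the range of Proposition 10.3 is precisely the alignment the paper relies on as well.
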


Since $\mathcal{S}(m|n;r,s)$ acts as $\rho_{r,s}(U(\gl(m|n)))$ on $T(r,s)$, this is evidently a faithful semisimple representation. Therefore, $\mathcal{S}(m|n;r,s)$ is semisimple as an algebra.  
\begin{remark}
Assuming \cite[Proposition 1.7 ]{semisimplesuper}, this implies that $\mathcal{S}(m|n;r,s)$ is a semisimple superalgebra.
\end{remark}

\begin{theorem}

If $m-n\geq r+s $, then $\mathcal{S}(m|n;r,s)$ is the centralizer algebra of the action of $B_{r,s}(m-n)$, or in symbols 
\[\mathcal{S}(m|n;r,s)\cong \operatorname{End}_{B_{r,s}(m-n)}(T(r,s))\]
\end{theorem}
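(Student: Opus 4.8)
The plan is to read the isomorphism off the double centralizer theorem \cite[Theorem 11.1.1]{MR2906817}, using the two commuting actions on $T(r,s)$ that the preceding results supply. On one side, the isomorphism $\mathcal{S}(m|n;r,s)\cong \rho_{r,s}(U(\gl(m|n)))$ established above realizes $\mathcal{S}(m|n;r,s)$ as a subalgebra of $\operatorname{End}_k(T(r,s))$, namely the image of $\rho_{r,s}$. On the other side, \cite{schurweyl} realizes the walled Brauer algebra $B_{r,s}(m-n)$ as $\operatorname{End}_{U(\gl(m|n))}(T(r,s))$ whenever $m-n\geq r+s$. These two subalgebras of $\operatorname{End}_k(T(r,s))$ centralize one another by construction, so the hypotheses of a double centralizer argument are within reach.

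First I would identify the commutant of $\mathcal{S}(m|n;r,s)$. Since $\mathcal{S}(m|n;r,s)$ acts on $T(r,s)$ through $\rho_{r,s}$ and thus has exactly the same image in $\operatorname{End}_k(T(r,s))$ as $U(\gl(m|n))$, an endomorphism of $T(r,s)$ commutes with $\mathcal{S}(m|n;r,s)$ if and only if it commutes with the $\gl(m|n)$-action. Therefore
\[\operatorname{End}_{\mathcal{S}(m|n;r,s)}(T(r,s))=\operatorname{End}_{U(\gl(m|n))}(T(r,s))\cong B_{r,s}(m-n),\]
the last step being \cite{schurweyl} and valid precisely because $m-n\geq r+s$.

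Next I would invoke the double centralizer theorem. By the previous subsection $\mathcal{S}(m|n;r,s)$ is semisimple, and because it coincides with its image $\rho_{r,s}(U(\gl(m|n)))$ it acts faithfully on $T(r,s)$. For a semisimple algebra acting faithfully on a finite-dimensional module, the theorem gives that the algebra equals its own bicommutant; combined with the identification of the commutant above this reads
\[\mathcal{S}(m|n;r,s)=\operatorname{End}_{\operatorname{End}_{\mathcal{S}(m|n;r,s)}(T(r,s))}(T(r,s))\cong \operatorname{End}_{B_{r,s}(m-n)}(T(r,s)),\]
which is the desired conclusion.

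I expect the only delicate point to be the interface between the graded and ungraded settings. The cited double centralizer theorem is stated for ordinary algebras, so I would apply it after forgetting the $\Z/2$-grading, using that $\mathcal{S}(m|n;r,s)$ is semisimple as an algebra (as proved above) rather than merely as a superalgebra; alternatively one could pass through a super version of the theorem. Either way, one must check that the commutant $\operatorname{End}_{\mathcal{S}(m|n;r,s)}(T(r,s))$ and the algebra $\operatorname{End}_{B_{r,s}(m-n)}(T(r,s))$ are formed with respect to compatible actions, so that the abstract isomorphism $B_{r,s}(m-n)\cong \operatorname{End}_{\mathcal{S}(m|n;r,s)}(T(r,s))$ equips $T(r,s)$ with exactly the bimodule structure the theorem requires. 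Once this is verified, the result is a direct substitution of the identifications already in hand.
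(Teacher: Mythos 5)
Your proposal is correct and follows essentially the same route as the paper: both combine the identification $\mathcal{S}(m|n;r,s)\cong\rho_{r,s}(U(\gl(m|n)))$ with mixed Schur--Weyl duality from \cite{schurweyl} and then apply the double centralizer theorem \cite[Theorem 11.1.1]{MR2906817}, using the semisimplicity and faithfulness of the action on $T(r,s)$ established in the preceding subsection. Your extra care about the bicommutant identification and the graded versus ungraded setting is a sound elaboration of the same argument, which the paper states more tersely.
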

\begin{proof}

By mixed Schur-Weyl duality it is known that $\operatorname{End}_{\gl(m|n)}(T(r,s))\cong \rho(B_{r,s}(m-n))$ where $\rho$ denotes the right action of $B_{r,s}(m-n)$ on $T(r,s)$. 
By the double centralizer theorem \cite[Theorem 11.1.1]{MR2906817} this implies that $\operatorname{End}_{B_{r,s}(m-n)}(T(r,s))=\rho_{r,s}(U(\gl(m|n)))$. 

\end{proof}

\section*{Acknowledgements }

I would like to thank Dr. Nicolas Guay for his invaluable guidance and support in this project.


\bibliography{RSS_ref}
\bibliographystyle{algebra_coliq}
\end{document}